\newcommand{\C}{{\mathbb C}}
\newcommand{\N}{{\mathbb N}}
\newcommand{\Q}{{\mathbb Q}}
\newcommand{\jac}{{\mathcal J}}
\newcommand{\hess}{{\mathcal H}}
\newcommand{\parder}[3][Default]{
	\frac{\partial \ifthenelse{\equal{#1}{Default}}{}{^{#1}}#2}{
              \partial #3 \ifthenelse{\equal{#1}{Default}}{}{^{#1}}}}
\newcommand{\tp}{^{\rm t}}
\newcommand{\grad}{\nabla}
\newcommand{\GL}{\operatorname{GL}}
\newcommand{\rk}{\operatorname{rk}}
\newcommand{\tr}{\operatorname{tr}}
\newcommand{\trdeg}{\operatorname{trdeg}}
\newcommand{\imp}{{\mathversion{bold}$\Rightarrow$ }}
\newcommand{\p}{\mathfrak{p}}
\theoremstyle{plain}
\newtheorem{theorem}{Theorem}[section]
\newtheorem*{hessestheorem*}{Hesse's (invalid) theorem}
\newtheorem{proposition}[theorem]{Proposition}
\newtheorem{lemma}[theorem]{Lemma}
\newtheorem{corollary}[theorem]{Corollary}
\newtheorem{problem}{Problem}
\newtheorem*{problemm}{Problem \ref{m}}
\theoremstyle{definition}
\theoremstyle{remark}
\newtheorem{remark}[theorem]{Remark}
\newtheorem{example}[theorem]{Example}
\theoremstyle{plain}
\numberwithin{equation}{section}
\title{Quasi-translations and singular Hessians}
\author{Michiel de Bondt}
\begin{document}

\maketitle

\begin{abstract}
In 1876 in \cite{gornoet}, the authors Paul Gordan and Max N{\"o}ther
classify all homogeneous polynomials $h$ in at most five variables
for which the Hessian determinant vanishes. For that purpose, they
study quasi-translations which are associated with singular 
Hessians. 

We will explain what quasi-translations are and formulate some elementary
properties of them. Additionally, we classify all quasi-translations with 
Jacobian rank one and all so-called irreducible homogeneous quasi-translations 
with Jacobian rank two. The latter is an important result of \cite{gornoet}.
Using these results, we classify all quasi-translations in dimension at most three
and all homogeneous quasi-translations in dimension at most four.

Furthermore, we describe the connection of quasi-translation with singular Hessians, 
and as an application, we will classify all polynomials in dimension
two and all homogeneous polynomials in dimensions three and four whose Hessian 
determinant vanishes. More precisely, we will show that up to linear terms, 
these polynomials can be expressed in $n-1$ linear forms, where $n$ is the 
dimension, according to an invalid theorem of Hesse.

In the last section, we formulate some known results and conjectures in connection 
with quasi-translations and singular Hessians.
\end{abstract}

\paragraph{Key words:} Quasi-translation, Hessian, determinant zero, homogeneous,
locally nilpotent derivation, algebraic dependence, linear dependence.

\paragraph{MSC 2010:} 14R05, 14R10, 14R20, 13N15.

\section{Quasi-translations}

Let $A$ be a commutative ring with $\Q$ and $n$ be a positive natural number. Write
$$
x = \left( \begin{array}{c} x_1 \\ x_2 \\ \vdots \\ x_n \end{array} \right)
  = (x_1, x_2, \ldots, x_n)
$$
for the identity map from $A^n$ to $A^n$, thus $x_1, x_2, \ldots, x_n$
are variables that correspond to the coordinates of $A^n$. 
A {\em translation} of $A^n$ is a map 
$$
x + c = \left( \begin{array}{c} x_1 + c_1 \\ x_2 + c_2 \\ 
               \vdots \\ x_n + c_n \end{array} \right)
$$
where $c \in A^n$ is a fixed vector. For a translation
$x + c$, we have that $x - c$ is the inverse polynomial map, because
$$
(x - c) \circ (x + c) = \left( \begin{array}{c} (x_1 + c_1) - c_1 \\ 
                               (x_2 + c_2) - c_2 \\ \vdots \\
                               (x_n + c_n) - c_n \end{array} \right) 
= \left( \begin{array}{c} x_1 \\ x_2 \\ \vdots \\ x_n \end{array} \right) = x
$$
is the identity map.

Inspired by this property, we define a {\em quasi-translation}
as a polynomial map
$$
x + H = \left( \begin{array}{c} x_1 + H_1 \\ x_2 + H_2 \\ 
               \vdots \\ x_n + H_n \end{array} \right)
      = \left( \begin{array}{c} x_1 + H_1(x) \\ x_2 + H_2(x) \\ 
               \vdots \\ x_n + H_n(x) \end{array} \right)
$$
such that $x - H$ is the inverse polynomial map of $x + H$, i.e.\@
$$
(x - H) \circ (x + H) = \left( \begin{array}{c} \big(x_1 + H_1(x)\big) - H_1(x + H) \\ 
                               \big(x_2 + H_2(x)\big) - H_2(x + H) \\ \vdots \\
                               \big(x_n + H_n(x)\big) - H_n(x + H) \\ \end{array} \right)
= x
$$
The difference between a translation $x + c$ and a quasi-translation $x + H$ is that
$c_i \in A$ for all $i$, while $H_i \in A[x] = A[x_1,x_2,\ldots,x_n]$ for all $i$.

For a regular translation $x + c$, we have that applying it $m$ times
comes down to the translation $x + mc$.
If $f \in \C[x]$ is an invariant of a regular translation $x + c$, i.e.\@
$f(x + c) = f(x)$, then $f(x + (m+1) c) = f(x + mc)$ follows by substituting
$x = x + mc$, whence by induction on $m$, $f(x + mc) = f(x)$ for all $m \in \N$.
Below, we show similar results for quasi-translations.

Since $(x - H) \circ (x + H) = x$, we see that
\begin{align*}
(x + mH) \circ (x + H) &= \big((m+1)x - m(x - H)\big) \circ (x + H) \\
&= (m+1)(x + H) - mx = x + (m+1)H
\end{align*}
Hence it follows by induction on $m$ that applying $x + H$ $m$ times comes 
down to the map $x + mH$ indeed. Consequently, if $f \in A[x]$ is an invariant of a 
$x + H$, i.e.\@ $f(x + H) = f(x)$, then
$$
f (x + mH) = f ((x + H) \circ \cdots \circ (x + H)) = f(x)
$$
follows for each $m \in \N$ by applying $f(x + H) = f(x)$ $m$ times.
Thus
\begin{equation} \label{fxmH}
f(x + H) = f(x) \Longrightarrow f(x + mH) = f(x) \mbox{ for all $m \in N$}
\end{equation}
With some techniques `on the shelf', we can improve \eqref{fxmH} to the following.

\begin{lemma} \label{vandermonde}
Assume that $x + H$ is a quasi-translation over $A$. Then
\begin{equation} \label{fxtH}
f(x + H) = f(x) \Longrightarrow f(x + tH) = f(x)
\end{equation}
where $t$ is a new indeterminate.
\end{lemma}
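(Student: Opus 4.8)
The plan is to leverage the already-established implication \eqref{fxmH}, which tells us that $f(x + mH) = f(x)$ for all natural numbers $m$, and upgrade this from infinitely many integer values of the parameter to the formal/generic statement with an indeterminate $t$. The key observation is that $f(x + tH)$, viewed as a polynomial in the single new variable $t$ with coefficients in $A[x]$, is constrained heavily by agreeing with the constant $f(x)$ at infinitely many points $t = m$.

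Let me think about how to make this rigorous. Consider $g(t) := f(x + tH) - f(x) \in A[x][t]$. This is a polynomial in $t$ of some finite degree $d$ (determined by the degree of $f$). From \eqref{fxmH}, $g(m) = 0$ for every $m \in \N$. If $A$ were a field of characteristic zero — or more to the point, if $A[x]$ were an integral domain — then a nonzero polynomial of degree $d$ in $t$ can have at most $d$ roots, so having infinitely many roots $0, 1, 2, \ldots$ forces $g = 0$ identically, giving $f(x + tH) = f(x)$. The title of the lemma, "Vandermonde," strongly suggests the intended mechanism: writing out $g(t) = \sum_{k=0}^{d} a_k(x)\, t^k$ and plugging in $t = 0, 1, \ldots, d$ yields a linear system whose coefficient matrix is a Vandermonde matrix with distinct nodes, hence invertible over $\Q \subseteq A$, forcing all $a_k(x) = 0$.

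**The main obstacle** I anticipate is that $A$ is only assumed to be a commutative ring containing $\Q$, not a domain, so the "a degree-$d$ polynomial has at most $d$ roots" argument is not directly available. This is exactly why the Vandermonde formulation is the right tool: I would take just $d+1$ of the evaluation points, say $t = 0, 1, 2, \ldots, d$, and assemble the equations $\sum_{k=0}^{d} a_k(x)\, j^k = 0$ for $j = 0, 1, \ldots, d$ into the matrix equation $V \mathbf{a} = 0$, where $V = (j^k)_{0 \le j,k \le d}$ is the Vandermonde matrix built from the nodes $0, 1, \ldots, d$. Its determinant is $\prod_{0 \le i < j \le d}(j - i)$, a nonzero integer, hence a unit in $A$ since $\Q \subseteq A$. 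Therefore $V$ is invertible over $A$ (equivalently over $A[x]$), and $\mathbf{a} = 0$, i.e. every $a_k(x)$ vanishes.

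**The remaining step** is bookkeeping about the degree of $g$ in $t$. I would first fix $f$ and note that $f(x + tH)$ is a polynomial in $t$ whose $t$-degree $d$ is bounded by $\deg f$ (it is at most the total degree of $f$ after the substitution $x_i \mapsto x_i + t H_i$). This $d$ is finite and independent of the evaluation argument, so I know in advance how many Vandermonde nodes to use. With that degree bound in hand, the argument closes: $f(x + tH) - f(x)$ is a $t$-polynomial of degree $\le d$ all of whose $d+1$ coefficients vanish, so it is identically zero, yielding \eqref{fxtH}.
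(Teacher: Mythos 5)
Your proposal is correct and takes essentially the same route as the paper: both arguments evaluate $f(x+tH)-f(x)$ at the $d+1$ integer points $t = 0,1,\ldots,d$ using \eqref{fxmH} and then invert the resulting Vandermonde system, which is possible because its determinant is a nonzero integer and hence a unit in $A \supseteq \Q$, forcing all $t$-coefficients to vanish. Your explicit remarks on why naive root-counting fails when $A$ is not a domain, and on the degree bound $\deg_t f(x+tH) \le \deg f$, are exactly the points the paper's proof relies on implicitly.
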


\begin{proof}
Let $d := \deg f$ and write $f(x+tH) - f = 
c_d t^d + c_{d-1} t^{d-1} + \cdots + c_1 t + c_0$. 
Since $f(x+mH) - f = 0$ for all $m \in \{0,1,2,\ldots,d\}$ 
on account of \eqref{fxmH},
$$
\left( \begin{array}{ccccc}
1 & 0 & 0^2 & \cdots & 0^d \\
1 & 1 & 1^2 & \cdots & 1^d \\
1 & 2 & 2^2 & \cdots & 2^d \\
\vdots & \vdots & \vdots & \ddots & \vdots \\
1 & d & d^2 & \cdots & d^d
\end{array} \right) \cdot \left( \begin{array}{c}
c_0 \\ c_1 \\ c_2 \\ \vdots \\ c_d
\end{array} \right) = \left( \begin{array}{c}
0 \\ 0 \\ 0 \\ \vdots \\ 0
\end{array} \right)
$$
The matrix on the left hand side is of Hadamard type and hence invertible,
because $A \supseteq \Q$. Consequently, $c_0 = c_1 = c_2 = \cdots = c_d = 0$. 
Hence $f(x+tH) = f$, as desired.
\end{proof}

But Gordan and N{\"o}ther did not use the term quasi-translation, and characterized
quasi-translations in another way. To describe it, we define the Jacobian matrix
of a polynomial map $H = (H_1, H_2, \ldots, H_m)$:
$$
\jac H := \left( \begin{array}{cccc}
\parder{}{x_1} H_1 & \parder{}{x_2} H_1 & \cdots & \parder{}{x_n} H_1 \\
\parder{}{x_1} H_2 & \parder{}{x_2} H_2 & \cdots & \parder{}{x_n} H_2 \\
\vdots & \vdots & \ddots & \vdots \\
\parder{}{x_1} H_m & \parder{}{x_2} H_m & \cdots & \parder{}{x_n} H_m 
\end{array} \right)
$$
Notice that $\jac H$ is a row vector if $H$ is a single polynomial, i.e.\@ $m = 1$.

If we see $H$ as a column vector, we can evaluate the matrix product $\jac H \cdot H$,
and the characterization of quasi-translations $x + H$ by Gordan and N{\"o}ther comes down to
\begin{equation} \label{JHH0}
\jac H \cdot H = (0^1, 0^2, \ldots, 0^n) 
\end{equation}
Here, the powers of zero are only taken to indicate the length of the zero vector on the 
right.

Gordan and N{\"o}ther derived the following under the assumption of \eqref{JHH0}. 
Take a polynomial $f \in A[x]$ such that $\jac f \cdot H = 0$. By the chain rule
\begin{align*}
\jac f(x + tH) \cdot H &= (\jac f)|_{x=x+tH} \cdot (I_n + t \jac H) \cdot H \\
&= (\jac f)|_{x=x+tH} \cdot H = \parder{}{t} f(x + tH)
\end{align*}
where $I_n$ is the unit matrix of size $n$. Here, $|_{x=\cdots}$ means substituting
$\cdots$ for $x$. Since $\jac f \cdot H = 0$, it follows from the above that 
\begin{equation} \label{qtinv}
\jac \big(f(x + tH) - f(x)\big) \cdot H = \parder{}{t} f(x + tH)
\end{equation}
Suppose that $t$ divides the right hand side of \eqref{qtinv} exactly $r < \infty$ times.
Then $t$ divides $f(x+tH) - f(x)$ more than $r$ times. Hence $t$ divides the left hand
side of \eqref{qtinv} more than $r$ times as well, which is a contradiction. 
So both sides of \eqref{qtinv} are zero. Since the right hand side of \eqref{qtinv} is zero 
and $A \supseteq \Q$, we get $f(x+tH) = f$, and that is what Gordan and N{\"o}ther derived from \eqref{JHH0}.

\begin{lemma} \label{gnlemma}
Assume $H$ is a polynomial map over $A$, such that $\jac H \cdot H = (0^1, 0^2, \ldots, 0^n)$. Then
\begin{equation} \label{JfH}
\jac f \cdot H = 0 \Longrightarrow f(x + tH) = f(x)
\end{equation}
for every $f \in A[x]$.
\end{lemma}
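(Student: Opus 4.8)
The plan is to introduce the auxiliary polynomial $g := f(x+tH) - f(x) \in A[x][t]$ and to show that it is the zero polynomial. Since $g$ vanishes when we set $t = 0$, it already divisible by $t$, and it suffices to prove that $g$ does not actually depend on $t$; because $A \supseteq \Q$, that in turn reduces to showing $\parder{}{t} f(x+tH) = 0$. So the whole lemma comes down to proving that the right-hand side of \eqref{qtinv} vanishes, after which evaluation at $t=0$ finishes.

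Here I would lean on the identity \eqref{qtinv} already derived above, namely $\jac\big(f(x+tH)-f(x)\big)\cdot H = \parder{}{t} f(x+tH)$. Its proof uses exactly the two hypotheses at hand: the quasi-translation relation $\jac H \cdot H = 0$ collapses $(I_n + t\,\jac H)\cdot H$ to $H$ in the chain-rule expansion, while the assumption $\jac f \cdot H = 0$ removes the contribution of the $f(x)$ term. Reading this identity with $g$ in mind, it says precisely that $\jac g \cdot H$, with the Jacobian taken with respect to $x$, equals $\partial_t g = \parder{}{t} f(x+tH)$.

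The crux is then a comparison of the orders of vanishing in $t$ of the two sides of this identity. Suppose, toward a contradiction, that the right-hand side is nonzero, and let $r < \infty$ be the exact number of times $t$ divides it. Writing $g = t^s u$ with $t \nmid u$ (and $s \ge 1$, since $g|_{t=0}=0$), the derivative $\partial_t g = t^{s-1}\big(s\,u + t\,\partial_t u\big)$ is divisible by $t$ exactly $s-1$ times: here it is essential that $A \supseteq \Q$, so that the integer coefficient $s$ is invertible and $s\,u$ does not vanish modulo $t$. Hence $s = r+1$, i.e.\ $t$ divides $g$ more than $r$ times. On the other hand the left-hand side $\jac g \cdot H$ is obtained from $g$ by differentiating with respect to the $x_i$ and multiplying by the entries of $H$, operations that cannot lower the power of $t$; so $t$ divides $\jac g \cdot H$ at least $s = r+1$ times. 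This contradicts the equality of the two sides, which are divisible by $t$ exactly $r$ times. Therefore the right-hand side must in fact be zero.

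Having forced $\parder{}{t} f(x+tH) = 0$, the characteristic-zero hypothesis lets me conclude that $f(x+tH)$ is constant in $t$, and evaluating at $t = 0$ gives $f(x+tH) = f(x)$, as claimed. I expect the only genuinely delicate point to be the valuation bookkeeping in the previous paragraph — in particular the repeated reliance on $A \supseteq \Q$, which is what guarantees that $\partial_t$ lowers the $t$-order by precisely one rather than by more, and which is also what turns a vanishing $t$-derivative into constancy. Over a ring of positive characteristic both of these steps, and hence the lemma itself, can break down.
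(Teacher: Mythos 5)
Your proof is correct and takes essentially the same route as the paper's own derivation (which appears in the text immediately preceding the lemma): the identity \eqref{qtinv}, obtained from the chain rule together with $\jac H \cdot H = 0$ and $\jac f \cdot H = 0$, followed by the same comparison of the orders of vanishing in $t$ of its two sides to force $\parder{}{t} f(x+tH) = 0$ and then constancy in $t$ via $A \supseteq \Q$. The only difference is that you make the valuation bookkeeping explicit (writing $g = t^s u$ and checking that $\partial_t$ lowers the $t$-order by exactly one because positive integers are invertible in $A$), a point the paper leaves implicit.
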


With lemmas \ref{vandermonde} and \ref{gnlemma}, we can prove the following.

\begin{proposition} \label{qtprop}
Let $A$ be a commutative ring with $\Q$ and $H: A^n \Rightarrow A^n$ be a polynomial map. 
Then the following properties are equivalent:
\begin{enumerate}[\upshape (1)]

\item $x + H$ is a quasi-translation, 

\item $H(x + tH) = H$, where $t$ is a new indeterminate,

\item $\jac H \cdot H = (0^1,0^2,\ldots,0^n)$.

\end{enumerate}
Furthermore,
\begin{equation} \label{fxtHeqv}
f(x + H) = f(x) \Longleftrightarrow f(x + tH) = f(x) \Longleftrightarrow \jac f \cdot H = 0
\end{equation}
holds for all $f \in A[x]$, and additionally
\begin{equation} \label{qtnilp}
(\jac H)|_{x=x-t\jac H} = (\jac H) + t (\jac H)^2 + t^2 (\jac H)^3 + \cdots
\end{equation}
if any of {\upshape (1)}, {\upshape (2)} and {\upshape (3)} is satisfied.
\end{proposition}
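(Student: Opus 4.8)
The plan is to establish the cyclic chain of implications $(2)\Rightarrow(1)\Rightarrow(2)\Rightarrow(3)\Rightarrow(2)$, invoking Lemma \ref{vandermonde} and Lemma \ref{gnlemma} for the two substantial steps, and then to read off \eqref{fxtHeqv} and \eqref{qtnilp} from the same ingredients. The starting observation is purely definitional: writing out $(x-H)\circ(x+H)$ componentwise, its $i$-th entry equals $\big(x_i+H_i(x)\big)-H_i(x+H)$, so $(x-H)\circ(x+H)=x$ is equivalent to $H_i(x+H)=H_i(x)$ for all $i$, i.e.\ to $H(x+H)=H$. Since the latter is exactly property $(2)$ specialized at $t=1$, this already yields $(2)\Rightarrow(1)$, and it identifies $(1)$ with the single equation $H(x+H)=H$.

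For $(1)\Rightarrow(2)$ I would apply Lemma \ref{vandermonde}, whose hypothesis ``$x+H$ is a quasi-translation'' is precisely $(1)$. Taking $f=H_i$ for each $i$, the identity $H_i(x+H)=H_i$ just established upgrades to $H_i(x+tH)=H_i$, and collecting the components gives $H(x+tH)=H$, which is $(2)$. For $(2)\Rightarrow(3)$ I would differentiate the identity $H(x+tH)=H$ with respect to the indeterminate $t$: the chain rule gives $\partial/\partial t\,H(x+tH)=(\jac H)|_{x=x+tH}\cdot H$, while the right-hand side is constant in $t$, so this derivative vanishes; evaluating at $t=0$ leaves $\jac H\cdot H=0$, which is $(3)$. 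Finally, for $(3)\Rightarrow(2)$ I would invoke Lemma \ref{gnlemma}, whose hypothesis is exactly $(3)$; applied to $f=H_i$ it requires $\jac H_i\cdot H=0$, and this is nothing but the $i$-th component of $\jac H\cdot H=0$, so it returns $H_i(x+tH)=H_i$, i.e.\ $(2)$. It is essential that the cycle is arranged so that each lemma is used only after its hypothesis---$(1)$ for Lemma \ref{vandermonde} and $(3)$ for Lemma \ref{gnlemma}---has been secured.

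Once the three conditions are known to be equivalent, the chain \eqref{fxtHeqv} follows for arbitrary $f\in A[x]$ by the same four moves applied to $f$ in place of $H_i$: specializing $t=1$ gives $f(x+tH)=f\Rightarrow f(x+H)=f$; Lemma \ref{vandermonde} gives the reverse; differentiating in $t$ and setting $t=0$ gives $f(x+tH)=f\Rightarrow\jac f\cdot H=0$; and Lemma \ref{gnlemma} gives $\jac f\cdot H=0\Rightarrow f(x+tH)=f$.

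For \eqref{qtnilp} I would first replace $t$ by $-t$ in $(2)$ to obtain $H(x-tH)=H$, then apply $\jac$ (differentiation with respect to $x$) to both sides. With $\Phi:=x-tH$ one has $\jac\Phi=I_n-t\jac H$, so the chain rule yields $(\jac H)|_{x=x-tH}\cdot(I_n-t\jac H)=\jac H$. The last step is to solve for $(\jac H)|_{x=x-tH}$ by inverting $I_n-t\jac H$; this is legitimate over the power-series ring $A[x][[t]]$ because $I_n-t\jac H$ reduces to $I_n$ modulo $t$, so its inverse is the geometric series $I_n+t\jac H+t^2(\jac H)^2+\cdots$, and multiplying through reproduces exactly the right-hand side of \eqref{qtnilp}. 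I expect this inversion to be the main point requiring care: one must read the substitution in \eqref{qtnilp} as the vector substitution $x\mapsto x-tH$ and justify the infinite expansion. In fact, since $(\jac H)|_{x=x-tH}$ is a polynomial in $t$, the series on the right necessarily terminates, which amounts to the nilpotence of $\jac H$; but this termination is not needed to verify the identity itself---only the $t$-adic invertibility of $I_n-t\jac H$ is.
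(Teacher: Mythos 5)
Your proposal is correct and follows essentially the same route as the paper: the same cycle of implications using Lemma \ref{vandermonde} for (1) $\Rightarrow$ (2) and Lemma \ref{gnlemma} for (3) $\Rightarrow$ (2), the definitional identification of (1) with $H(x+H)=H$, extraction of the coefficient of $t^1$ (equivalently, the $t$-derivative at $t=0$) for (2) $\Rightarrow$ (3), and the same derivation of \eqref{fxtHeqv} and \eqref{qtnilp}. Your only additions are to spell out the $t$-adic geometric-series inversion of $I_n - t\jac H$ that the paper leaves implicit, and to read the substitution in \eqref{qtnilp} correctly as $x \mapsto x - tH$; both are sound.
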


\begin{proof}
The middle hand side of \eqref{fxtHeqv} gives the left hand side by substituting
$t = 1$ and the right hand side by taking the coefficient of $t^1$.
Hence \eqref{fxtHeqv} follows from (1) and lemma \ref{vandermonde} and (3) and 
lemma \ref{gnlemma}.

By taking the Jacobian of (2), we get $(\jac H)|_{x=x+tH} \cdot (I_n + t \jac H) 
= \jac H$, which gives \eqref{qtnilp} after substituting $t = -t$.
Therefore, it remains to show that (1), (2) an (3) are equivalent.
\begin{description}

\item [(1) \imp (2)]
Assume (1). Since $x = (x - H) \circ (x + H) = x + H - H(x+H)$, we see that
$H(x + H) = H$, and (2) follows by taking $f = H_i$ in lemma \ref{vandermonde}.

\item [(2) \imp (1)]
Assume (2). Then
\begin{align} \label{ixtH}
(x - tH) \circ (x + tH) &= (x + tH) - tH(x + tH) \nonumber \\
&= x + tH - tH = x
\end{align}
which gives (1) after substituting $t = 1$.

\item[(2) \imp (3)]
Assume (2). By taking the coefficient of $t^1$ of (2), we get (3).

\item[(3) \imp (2)]
Assume (3). By taking $f = H_i$ in lemma \ref{gnlemma}, we get (2). \qedhere

\end{description}
\end{proof}

Notice that \eqref{ixtH} tells us that in some sense, $x + tH$ is 
a quasi-translation as well.

\begin{remark}
Let $D$ be the derivation $\sum_{i=1}^n H_i \parder{}{x_i}$. Then one can easily verify that
$$
\jac f \cdot H = 0 \Longleftrightarrow D f = 0
$$
and
$$
\jac H \cdot H = 0 \Longleftrightarrow D^2 x_1 = D^2 x_2 = \cdots = D^2 x_n = 0
$$
Hence quasi-translations correspond to a special kind of locally nilpotent derivations.
Furthermore, invariants of the quasi-translation $x + H$ are just kernel elements of
$D$. 

In addition, we can write $\exp(D)$ and $\exp (tD)$ for the automorphisms
corresponding to the maps $x + H$ and $x + tH$ respectively. But in order to make the 
article more readable for readers that are not familiar with derivations, we will omit the 
terminology of derivations further in this article.
\end{remark}

\section{Singular Hessians and Jacobians}

Now that we have had some introduction about quasi-translations, it is time to show 
how they are connected to singular Hessians. For that
purpose, we define the Hessian matrix of a polynomial $h \in \C[x]$ as follows:
$$
\hess h := \left( \begin{array}{cccc}
\parder{}{x_1}\parder{}{x_1} h & \parder{}{x_2}\parder{}{x_1} h & 
\cdots & \parder{}{x_n}\parder{}{x_1} h \\
\parder{}{x_1}\parder{}{x_2} h & \parder{}{x_2}\parder{}{x_2} h & 
\cdots & \parder{}{x_n}\parder{}{x_2} h \\
\vdots & \vdots & \ddots & \vdots \\
\parder{}{x_1}\parder{}{x_n} h & \parder{}{x_2}\parder{}{x_n} h & 
\cdots & \parder{}{x_n}\parder{}{x_n} h
\end{array} \right)
$$
A Hessian is a Jacobian which is symmetric 
with respect to the main diagonal. Hence each dependence between the columns of
a Hessian is also a dependence between the rows of it. 

For generality, we consider Jacobians which are not necessary Hessians.
Assume $G: \C^n \rightarrow \C^n$ is a polynomial map and $\det \jac G = 0$.
Let $\rk M$ denote the rank of a matrix $M$.
It is known that $\rk \jac H = \trdeg_{\C} \C(H)$, see e.g.\@ Proposition 1.2.9 of either 
\cite{arnobook} of \cite{homokema}. Hence there exists a nonzero polynomial 
$R \in \C[y] = \C[y_1,y_2,\ldots,y_n]$ such that 
\begin{equation} \label{gradrel}
R(G_1,G_2, \ldots, G_n) = 0
\end{equation}
Here, $y_1, y_2, \ldots, y_n$ are also variables that correspond to the 
coordinates of $\C^n$. Define
\begin{equation} \label{qthess}
H_i := \Big(\parder{R}{y_i}\Big)\Big|_{y=G} 
\end{equation}
for all $i$. By taking the Jacobian of \eqref{gradrel}, we get
$$
\jac 0 = \jac (R(G)) = (\jac_y R)|_{y = G} \cdot \jac G = H\tp \cdot \jac G
$$
where $H\tp = (H_1~H_2~\cdots~H_n)$ is the transpose of $H$.
Hence $H$ is a dependence between the {\em rows} of $\jac G$.
If $\jac G$ is a Hessian, $H$ is also a dependence between 
the {\em columns} of $\jac G$. Hence it seems a good idea to assume that 
$\jac G \cdot H = 0$. But for the sake of generality, we only assume that 
$\jac G \cdot \tilde{H} = 0$ for some polynomial vector $\tilde{H}$ 
that does not need to be equal to $H$.

If we write $\grad_y R$ for the transpose of $\jac_y R$, then $H = (\grad_y R)(G)$, 
and by the chain rule
\begin{align}
\jac H \cdot \tilde{H} = \jac (\grad_y R)(G) \cdot \tilde{H} = 
(\jac_y \grad_y R)|_{y=G} \cdot \jac G \cdot \tilde{H} = 0
\end{align}
whence $x + H$ is a quasi-translation on account of (3) $\Rightarrow$ (1)
of \ref{qtprop} if $H = \tilde{H}$. Thus we have proved the following. 

\begin{proposition} \label{qthesscor}
Assume $H_i$ is defined as in \eqref{qthess} for each $i$, with $R$ is as in 
\eqref{gradrel}, where $G: \C^n \rightarrow \C^n$ is a polynomial map.
Then $H$ is a dependence between the rows of $\jac G$.

If $\tilde{H}$ is
a dependence between the columns of $\jac G$, then $\jac H \cdot \tilde{H} = 0$.
In particular, if $H$ is a dependence between the columns of $\jac G$, then 
$x + H$ is a quasi-translation.
\end{proposition}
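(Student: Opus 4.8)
The plan is to read everything off from the chain rule, applied first to the defining relation \eqref{gradrel} and then to the expression $H = (\grad_y R)(G)$, keeping careful track throughout of the distinction between dependences among rows and dependences among columns.

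First I would establish that $H$ is a dependence between the rows of $\jac G$. Since $R(G) = 0$ from \eqref{gradrel} is an identity in $\C[x]$, its Jacobian vanishes, and the chain rule gives
$$
0 = \jac\big(R(G)\big) = (\jac_y R)\big|_{y=G} \cdot \jac G = H\tp \cdot \jac G,
$$
because $(\jac_y R)|_{y=G}$ is exactly the row vector $H\tp = (H_1~H_2~\cdots~H_n)$ by \eqref{qthess}. The identity $H\tp \cdot \jac G = 0$ says precisely that the linear combination of the rows of $\jac G$ with coefficients $H_1, H_2, \ldots, H_n$ vanishes, i.e.\ that $H$ is a row dependence. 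This part needs only a single application of the chain rule.

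For the second claim I would differentiate $H$ itself. Writing $H = (\grad_y R)(G)$ exhibits $H$ as the composition of the gradient map $\grad_y R$ with $G$, so a second application of the chain rule gives
$$
\jac H = \big(\jac_y \grad_y R\big)\big|_{y=G} \cdot \jac G,
$$
the crucial point being that $\jac H$ factors through $\jac G$ on the right (the left factor is just the Hessian of $R$ evaluated at $y = G$). Hence, if $\tilde{H}$ is a dependence between the columns of $\jac G$, meaning $\jac G \cdot \tilde{H} = 0$, then
$$
\jac H \cdot \tilde{H} = \big(\jac_y \grad_y R\big)\big|_{y=G} \cdot \big(\jac G \cdot \tilde{H}\big) = 0.
$$
Note that the symmetry of the Hessian plays no role whatsoever; all that matters is that the inner factor $\jac G \cdot \tilde{H}$ is annihilated.

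The third claim then follows by specialization. Taking $\tilde{H} = H$, the hypothesis becomes $\jac G \cdot H = 0$, and the second claim yields $\jac H \cdot H = 0$, which is condition (3) of Proposition \ref{qtprop}; the implication (3) $\Rightarrow$ (1) of that proposition shows $x + H$ is a quasi-translation. I do not anticipate any real obstacle here, since the whole argument is two uses of the chain rule together with the observation that $\jac H$ factors through $\jac G$. The only step demanding genuine care is the bookkeeping: one must keep straight the difference between a dependence among the \emph{rows} of $\jac G$ (which differentiating $R(G) = 0$ produces for free) and a dependence among its \emph{columns} (which must be supplied as the extra hypothesis on $\tilde{H}$, and which coincides with the row dependence exactly when $\jac G$ is symmetric, as for a Hessian).
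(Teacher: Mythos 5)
Your proof is correct and follows essentially the same route as the paper: one application of the chain rule to $R(G)=0$ to get the row dependence $H\tp \cdot \jac G = 0$, a second application to $H = (\grad_y R)(G)$ to factor $\jac H$ through $\jac G$ on the right, and then the specialization $\tilde{H} = H$ combined with the implication (3) $\Rightarrow$ (1) of Proposition \ref{qtprop}. Your closing observation that the symmetry of the Hessian is irrelevant to the argument (it only matters for identifying row dependences with column dependences) matches the paper's own framing exactly.
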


But $H$ defined as above may be the zero map. This is however not the case if
we choose the degree of $R$ as small as possible, without affecting \eqref{gradrel},
because $\parder{}{y_i} R$ is nonzero for some $i$ and of lower degree than $R$ 
itself.

\begin{example}
Let $p = x_1^2 x_3 + x_1 x_2 x_4 + x_2^2 x_5$, $h = p^r$ and $R = y_3 y_5 - y_4^2$. Then
$$
R(\grad h) = (r\, p^{r-1} x_1^2) (r\, p^{r-1} x_2^2) - (r\, p^{r-1} x_1 x_2)^2 = 0
$$
and $\grad_y R = (0,0,y_5,-2y_4,y_3)$. So 
$$
x + H := x + (\grad_y R)(\grad h) = x + r\, p^{r-1} (0,0,x_2^2, -2 x_1 x_2, x_1^2) 
$$
is a quasi-translation. Indeed, $x_1$, $x_2$ and $p$ are invariants of $x + H$.
\end{example}

\begin{example} \label{example}
Let $n \ge 6$ be even and 
$$
h = (A x_1 - B x_2)^2 + (A x_3 - B x_4)^2 + \cdots + (A x_{n-1} - B x_n)^2
$$
Then $(B y_{2i-1} + A y_{2i})|_{y = \grad h} = 0$ for all $i$, because
$$
B\parder{h}{x_{2i-1}} = 2AB(A x_{2i-1} - B x_{2i}) = -A\parder{h}{x_{2i}}
$$
for all $i$. Hence $(B^2 y_{2i-1}^2 - A^2 y_{2i}^2)|_{y = \grad h} = 0$ for all $i$
as well, and also
$$
R := \frac1{4AB} 
\big(B^2 (y_1^2 + y_3^2 + \cdots + y_{n-1}^2) - A^2 (y_2^2 + y_4^2 \cdots + y_n^2)^2 \big)
$$
satisfies $R(\grad h) = 0$. Now one can compute that
$$
x + H := x + \left(\begin{array}{c}
\parder{R}{y_1}(\grad h) \\
\parder{R}{y_2}(\grad h) \\
\parder{R}{y_3}(\grad h) \\
\parder{R}{y_4}(\grad h) \\
\vdots \\
\parder{R}{y_{n-1}}(\grad h) \\
\parder{R}{y_n}(\grad h)
\end{array} \right) = \left(\begin{array}{c}
B(A x_1 - B x_2) \\
A(A x_1 - B x_2) \\
B(A x_3 - B x_4) \\
A(A x_3 - B x_4) \\
\vdots \\
B(A x_{n-1} - B x_n) \\
A(A x_{n-1} - B x_n)
\end{array} \right)
$$
is a quasi-translation, and $\hess h \cdot H = (0^1,0^2,\ldots,0^n)$. 

Now let $a := x_1 x_4 - x_2 x_3$, $b := x_3 x_6 - x_4 x_5$, and
$G := (\grad h)|_{A = a, B = b}$.
Then $R(G) = 0$ as well. Thus $\tilde{H} := (\grad_y R)(G) = H|_{A = a, B = b}$ 
is a dependence between the rows of 
$$
\jac G = \big((\hess h) + (\jac_A \grad h) \cdot \jac a +
         (\jac_B \grad h) \cdot \jac b\big)
         \big|_{A = a, B = b}
$$
Since 
\begin{align*}
\lefteqn{\jac (x_{2i-1}x_{2j} - x_{2i}x_{2j-1}) \cdot \tilde{H}} \\
&= (x_{2i-1} \tilde{H}_{2j} - x_{2i} \tilde{H}_{2j-1}) + 
   (x_{2j} \tilde{H}_{2i-1} - x_{2j-1} \tilde{H}_{2i}) \\
&= (a x_{2i-1} - b x_{2i}) (a x_{2j-1} - b x_{2j}) + 
   (b x_{2j} - a x_{2j-1}) (a x_{2i-1} - b x_{2i}) \\
&= 0
\end{align*}
for all $i,j$, we have $\jac a \cdot \tilde{H} = \jac b \cdot \tilde{H} = 0$. Consequently,
\begin{align*}
\jac G \cdot \tilde{H} &= (\hess h)|_{A = a, B = b} \cdot \tilde{H} + 
            (\jac_a \grad h) \cdot \jac a \cdot \tilde{H} +
            (\jac_b \grad h) \cdot \jac b \cdot \tilde{H} \\
         &= \big((\hess h) \cdot H\big)\big|_{A = a, B = b} = (0^1,0^2,\ldots,0^n)
\end{align*}
Thus $x + \tilde{H}$ is a quasi-translation as well. In fact, it appears in 
\cite[Th.\@ 2.1]{debunk} as a homogeneous quasi-translation without linear invariants.
\end{example}

\section{Hesse's theorem}

A polynomial is {\em homogeneous} of degree $d$ if all its terms have degree $d$.
A homogeneous polynomial of degree one is called a {\em linear form}.
We call a quasi-translation $x + H$ {\em homogeneous} if there exists a $d \in \N$
such that each component $H_i$ of $H$ is either zero or homogeneous of degree $d$.

When Gordan and N{\"o}ther published their article in 1876, it was two years ago that
Otto Hesse had died. The role of Hesse is, that the starting point of Gordan and N{\"o}ther
was a wrong theorem of Hesse, which they proved in dimensions two, three and four, and 
disproved for all dimensions greater than four.

\begin{hessestheorem*}
Assume $n \ge 2$ and $h$ is a homogeneous polynomial in $x_1, x_2, \ldots, x_n$ 
over $\C$ whose Hessian matrix is singular. Then there is a constant vector
$c = (c_1, c_2, \ldots, c_n)$ such that
\begin{equation} \label{linrel}
c_1 \parder{h}{x_1} + c_2 \parder{h}{x_2} + \cdots + c_n \parder{h}{x_n} = 0
\end{equation}
or equivalently $\jac h \cdot c = 0$.
\end{hessestheorem*}

Gordan and N{\"o}ther first observe that \eqref{linrel} is equivalent to
the existence of a linear transformation which makes $h$ a polynomial in less
than $n$ variables, in other words, the existence of an invertible matrix $T$ 
such that
$$
h(Tx) \in \C[x_1,x_2,\ldots,x_{n-1}]
$$
In order to see that, we first compute the Jacobian matrix of $h(Tx)$
by way of the chain rule, where $|_{x=Tx}$ means substituting the vector $Tx$ for $x$.
\begin{equation} \label{jacgrad}
\jac \big(h(Tx)\big) = \big(\jac h(x)\big)\big|_{x=Tx} \cdot T
\end{equation}
If the $i$-th column of $T$ is equal to $c$, we have
$$
\parder{}{x_i} h(Tx) = \big(\jac h(x)\big)\big|_{x=Tx} \cdot c 
= \bigg(\sum_{j=1}^n c_j \parder{h}{x_j}\bigg)\bigg|_{x=Tx}
$$
whence for all $c_0 \in \C$,
\begin{equation} \label{gnrem}
\parder{}{x_i} h(Tx) = c_0 \Longleftrightarrow \sum_{j=1}^n c_j\parder{h}{x_j} = c_0
\end{equation}
By taking $i = n$ and $c_0 = 0$ in \eqref{gnrem}, we get the above remark of Gordan 
and N{\"o}ther.

We shall prove Hesse's theorem in dimensions two, three, and four. If
we drop the condition that $h$ is homogeneous, we can prove Hesse's
theorem in dimension two, but only for polynomials $h$ without linear terms.
For polynomials $h$ with linear terms, we can only get
$$
c_1 \parder{h}{x_1} + c_2 \parder{h}{x_2} + \cdots + c_n \parder{h}{x_n} = c_0 \in \C
$$
which holds for homogeneous $h$ in dimension one as well ($h = x_1$ is homogeneous,
but does not satisfy Hesse's theorem in dimension one). So we will prove the following.

\begin{theorem} \label{hesspos}
Hesse's theorem is true in dimension $n \le 4$, and for non-homogeneous
polynomials in dimension $n \le 2$.  
\end{theorem}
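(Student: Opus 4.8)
The plan is to translate the hypothesis $\det \hess h = 0$ into the language of quasi-translations and then to \emph{upgrade} the polynomial dependence so obtained to a constant one. We may assume $d := \deg h \ge 2$, the linear case being trivial since for $n \ge 2$ the $n$ constants $\parder{h}{x_1}, \ldots, \parder{h}{x_n}$ are automatically $\C$-linearly dependent. Because $\rk \hess h = \rk \jac(\grad h) = \trdeg_{\C} \C(\grad h) < n$, the components of $\grad h$ are algebraically dependent, so there is a nonzero relation $R(\grad h) = 0$ as in \eqref{gradrel}. Since these components are homogeneous of one and the same degree $d-1$, I may take $R$ homogeneous: any nonzero homogeneous part of a relation is again a relation, as the distinct homogeneous pieces of $R$ evaluate to forms of the distinct $x$-degrees $k(d-1)$. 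Choosing $R$ homogeneous of minimal degree guarantees that $H := (\grad_y R)(\grad h)$, defined as in \eqref{qthess} with $G = \grad h$, is a nonzero homogeneous polynomial map. By Proposition \ref{qthesscor} and the symmetry of the Hessian, $H$ is simultaneously a dependence between the rows and the columns of $\jac G = \hess h$, so $\hess h \cdot H = (0^1, \ldots, 0^n)$ and $x + H$ is a homogeneous quasi-translation.

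The conclusion of Hesse's theorem is that $\hess h$ admits a \emph{constant} column dependence. Indeed, for a constant vector $c$ one computes $\grad(\jac h \cdot c) = \hess h \cdot c$, so $\hess h \cdot c = 0$ holds if and only if $\jac h \cdot c$ is a form of degree $d-1 \ge 1$ with vanishing gradient, that is, if and only if $\jac h \cdot c = 0$. Thus the entire problem is to replace the polynomial dependence $H$ by a constant dependence $c$, and this is exactly where the dimension enters.

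For $n = 2$ the replacement is immediate and needs no quasi-translation at all: $\parder{h}{x_1}$ and $\parder{h}{x_2}$ are algebraically dependent and homogeneous of equal degree, so by the argument of the first paragraph the minimal relation between them may be taken homogeneous, and a homogeneous polynomial in two variables splits into linear factors; hence some factor $\alpha \parder{h}{x_1} - \beta \parder{h}{x_2}$ vanishes, which is the desired linear dependence. For the non-homogeneous case in dimension two I would instead use the quasi-translation $x + H$ directly: a two-variable quasi-translation has nilpotent $2 \times 2$ Jacobian, and after classifying these one obtains $\hess h \cdot c = 0$ for a constant $c$, whence $\jac h \cdot c = c_0 \in \C$; the hypothesis that $h$ has no linear terms forces the partials to have no constant term, so $c_0 = 0$. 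This last step is precisely where the hypotheses are sharp, the form $h = x_1$ in dimension one showing what goes wrong when a linear term is present.

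The serious case is $n \in \{3,4\}$, where the factorization trick fails because a homogeneous relation among three or more forms need not split into linear factors. Here I would argue by the Jacobian rank $r = \rk \jac H = \trdeg_{\C} \C(H)$. When $r \le 1$, the classification of rank-one quasi-translations (which I would establish first) writes $H = \phi \cdot c$ for a nonzero polynomial $\phi$ and a constant vector $c$ with $\jac \phi \cdot c = 0$; dividing $\hess h \cdot H = 0$ by $\phi$ in the domain $\C[x]$ yields the constant dependence $\hess h \cdot c = 0$, and we are done. The remaining, and genuinely hard, case is $r = 2$, and I expect it to be the main obstacle: it requires Gordan and N{\"o}ther's classification of irreducible homogeneous quasi-translations of Jacobian rank two, after first reducing to the irreducible situation. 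Granting that classification, one verifies that in dimension four such a quasi-translation still carries a constant column dependence of $\hess h$; the point is quantitative, namely that a rank-two homogeneous quasi-translation lacking a constant dependence forces the ambient dimension above four, the known Hesse counterexamples (such as the quasi-translation of Example \ref{example}) living in higher dimension.
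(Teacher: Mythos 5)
Your setup (minimal homogeneous $R$, $H = (\grad_y R)(\grad h)$, the reduction of Hesse's conclusion to a constant column dependence of $\hess h$), your $n \le 2$ arguments, and your rank $\le 1$ case are all sound, and the rank $\le 1$ case is even a little slicker than the paper's: you cancel $\phi$ from $\hess h \cdot \phi\, c = 0$ directly, where the paper instead re-runs a minimality argument (theorem \ref{regtrans}) to conclude $\deg R = 1$. But the case you yourself flag as the heart of the matter, $\rk \jac H = 2$ with $n = 4$, is not proved: ``granting that classification, one verifies that \dots such a quasi-translation still carries a constant column dependence'' is an assertion of the theorem, not an argument, and the quantitative claim behind it --- that a rank-two homogeneous quasi-translation lacking a constant dependence forces $n \ge 5$ --- is false as a statement about quasi-translations. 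Rank-two homogeneous quasi-translations in dimension four do exist (corollary \ref{qth4}(2): $H = (0,0,b\,g,a\,g)$ with $a, b \in \C[x_1,x_2]$ and $g \in \C[x_1,x_2,a\,x_3-b\,x_4]$), and if $\hess h \cdot H = 0$ for such an $H$, cancelling $g$ only yields the \emph{polynomial} dependence $\hess h \cdot (0,0,b,a) = 0$, not a constant one. No amount of information about $H$ alone can produce the constant vector; the extra input must come from the minimality of $R$, which your rank-two case never invokes.

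That is exactly what the paper's theorem \ref{regtranshmg} supplies, and it is the missing idea. Gordan and N{\"o}ther's theorem \ref{qthrk2} is used only to say that the image of $H$ spans a space of dimension at most $n - 2 = 2$, i.e.\@ that there are $n-2$ independent constant vectors $c^{(i)}$ with $(c^{(i)})\tp H = 0$. Since $(c^{(i)})\tp H = \big((c^{(i)})\tp \grad_y R\big)(\grad h)$ and $(c^{(i)})\tp \grad_y R$ has degree less than $\deg R$, minimality of $R$ forces $(c^{(i)})\tp \grad_y R = 0$: constant dependences of $H$ lift to constant dependences of $\grad_y R$. Hence after a linear transformation $R$ becomes a homogeneous polynomial in only two variables $y_{n-1}, y_n$, and then your own two-variable trick from the $n = 2$ case --- applied to $R$ rather than to $h$ --- gives a linear relation between two partial derivatives of the transformed $h$, i.e.\@ Hesse's conclusion; by minimality this forces $\deg R = 1$, so $H$ is constant, contradicting $\rk \jac H = 2$. (A smaller omission of the same kind: you never rule out $\rk \jac H = 3$ in dimension four; that requires theorem \ref{qthrk1}, by which a homogeneous quasi-translation not of the form $g\,c$ satisfies $2 \le \rk \jac H \le n-2$.)
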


Before we prove our affirmative results about Hesse's theorem, we take a look at
the effects of affine transformations of $h$ and of removing linear terms of $h$.

View the vectors $x = (x_1,x_2,\ldots, x_n)$, $c = (c_1,c_2,\ldots, c_n)$ and
$\tilde{c} = (\tilde{c}_1, \tilde{c}_2, \ldots, \allowbreak \tilde{c}_n)$
as column matrices of variables and constants respectively. Let $T$ be
an invertible matrix of size $n$ over $\C$. Then
$$
h(Tx + c)
$$
is an affine transformation of $h$. Write $M\tp$ for
the {\em transpose} of a matrix $M$. Then $\tilde{c}\tp x$, which is the product of
a row matrix and a column matrix, is a matrix with only one entry, and
we associate it with the value of its only entry. If we define
$$
\tilde{h} := h(Tx + c) - \tilde{c}\tp x
$$
then it appears that $\det \hess \tilde{h} = 0$,
if and only if $\det \hess h = 0$. To see this, notice that
on account of \eqref{jacgrad},
$$
\jac \tilde{h} = \jac \big(h(Tx + c) - \tilde{c}\tp x\big)
= (\jac h)|_{x=Tx+c} \cdot T - \tilde{c}\tp
$$
The transpose of this equals
$$
\grad \tilde{h} = T\tp \cdot (\grad h)|_{x=Tx+c} - \tilde{c}
$$ 
where $\grad f$ stands for the transpose of $\jac f$. Taking the Jacobian of the above,
we get
$$
\hess \tilde{h} = \hess \big(h(Tx + c) - \tilde{c}\tp x\big)
= \jac \big(T\tp \cdot (\grad h)|_{x=Tx+c} - \tilde{c}\big)
= T\tp \cdot (\hess h)|_{x = Tx+c} \cdot T
$$
which is a singular matrix, if and only if $\hess h$ is. If $\det \hess h = 0$, then
$R(\grad h) = 0$ for some nonzero $R \in \C[y]$ on account of \eqref{gradrel}, 
and similarly $\tilde{R}(\grad \tilde{h}) = 0$. 

We shall describe a natural connection between $R$ and $\tilde{R}$.
If $R(\grad h) = 0$ for some $R \in \C[y]$, then
$$
R\Big((T\tp)^{-1} \big((T\tp \cdot (\grad h)|_{x=Tx+c} - \tilde{c}) + \tilde{c}\big)\Big) = 
\Big(R(\grad h)\Big)\Big|_{x=Tx+c} = 0
$$
thus $\tilde{R} := R((T\tp)^{-1}(y-\tilde{c}))$ satisfies $\tilde{R}(\grad \tilde{h}) = 0$.

If we define $H_i$ as in \eqref{qthess} with $G = \grad h$ for each $i$, then
we get $H = (\grad_y R) (\grad h)$, 
and $x + H$ is a quasi-translation on account of corollary \ref{qthesscor}.
In a similar manner, $x + \tilde{H}$ is a quasi-translation if
$\tilde{H} = (\grad_y \tilde{R}) (\grad \tilde{h})$, and again we describe a natural 
connection. By the chain rule,
$$
\jac_y \tilde{R} = \jac_y R\big((T\tp)^{-1}(y-\tilde{c})\big) = 
(\jac_y R)|_{y=(T\tp)^{-1}(y-\tilde{c})} \cdot (T\tp)^{-1}
$$
whence
$$
\grad_y \tilde{R} = T^{-1} (\grad_y R)|_{y=(T\tp)^{-1}(y-\tilde{c})}
$$
Combining this with $\grad \tilde{h} = T\tp \cdot (\grad h)|_{x=Tx+c} + \tilde{c}$, we
get that
$$ 
\tilde{H} = (\grad_y \tilde{R}) (\grad \tilde{h}) = T^{-1}H(Tx+c)
$$
thus $\tilde{H}$ is a linear conjugation of $H$ if $c = 0$. In general,
$$
x + \tilde{H} = x + T^{-1} H(Tx+c) = T^{-1} \Big(\big(Tx + c + H(Tx+c)\big) - c\Big)
$$
is an affinely linear conjugation of $x+H$. In the next section, we shall show that 
affinely linear conjugations of quasi-translations are again quasi-translations.

We are now ready to prove the following.

\begin{theorem} \label{regtrans}
Assume $h \in \C[x]$ and $R \ne 0$ satisfies \eqref{gradrel} 
and is of minimum degree. Define $H = (\grad_y R) (\grad h)$.

If $h$ is homogeneous, then $R$ and $H$ are homogeneous as well.

If the dimension of the linear span of the image of $H$ is at most one,
then $\deg R = 1$ and $x + H$ is a regular translation.
\end{theorem}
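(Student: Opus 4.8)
The plan is to treat the two assertions separately, dealing with the homogeneity statement first and the rank-one statement second, using the minimality of $\deg R$ as the essential tool in both.

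For the homogeneity claim, suppose $h$ is homogeneous of degree $d$, so that every entry of $\grad h$ is homogeneous of degree $d-1$. I would decompose $R = \sum_j R^{(j)}$ into its homogeneous parts in $y$, where $R^{(j)}$ has degree $j$. Substituting $y = \grad h$ sends $R^{(j)}$ to a polynomial that is homogeneous of degree $j(d-1)$ in $x$ (a constant when $j=0$). Assuming $d \ge 2$ (the case $d \le 1$ being degenerate), the exponents $j(d-1)$ are pairwise distinct, so the single relation $R(\grad h)=0$ from \eqref{gradrel} splits into $R^{(j)}(\grad h)=0$ for every $j$. Each nonzero $R^{(j)}$ is then itself a solution of \eqref{gradrel} of degree $j \le \deg R$, so minimality of $\deg R$ forces $j = \deg R$; hence only the top homogeneous part survives and $R$ is homogeneous. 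Homogeneity of $H$ is then immediate: each $\parder{R}{y_i}$ is homogeneous of degree $\deg R - 1$, and composing with the degree-$(d-1)$ map $\grad h$ makes every nonzero component $H_i$ homogeneous of the common degree $(\deg R - 1)(d-1)$.

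For the second assertion, I first record that $H \ne 0$: if $H$ were zero then every $\parder{R}{y_i}$ would satisfy \eqref{gradrel} while having degree $< \deg R$, forcing $\parder{R}{y_i}=0$ for all $i$ and hence $R$ constant, contradicting $R(\grad h)=0$ with $R \ne 0$. Thus the linear span of the image of $H$ is exactly one-dimensional, so $H = g\,v$ for a fixed nonzero vector $v \in \C^n$ and a nonzero $g \in \C[x]$ (take $g = H_i/v_i$ for any index $i$ with $v_i \ne 0$). The key step is to feed this back into $R$: for every $w \in \C^n$ with $w\tp v = 0$ we have $w\tp H = g\,(w\tp v) = 0$, which says that the polynomial $\sum_i w_i \parder{R}{y_i}$, of degree at most $\deg R - 1$, vanishes after the substitution $y = \grad h$; minimality of $\deg R$ again forces it to vanish identically. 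Hence $\grad_y R$ is everywhere proportional to $v$, and by the usual integrability argument $R$ depends only on the single linear form $v\tp y$, say $R = \rho(v\tp y)$ with $\rho$ a nonzero one-variable polynomial. From $\rho(v\tp\grad h)=0$ and the fact that $\rho$ has only finitely many roots, $v\tp \grad h$ must be a constant $c$; then $v\tp y - c$ is a degree-one solution of \eqref{gradrel}, so minimality gives $\deg R = 1$. Finally a degree-one $R = \rho(v\tp y)$ has constant gradient $\grad_y R$, whence $H = (\grad_y R)(\grad h)$ is a constant vector and $x + H$ is a regular translation, as claimed.

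The main obstacle I anticipate is the passage from ``$\grad_y R$ is pointwise proportional to $v$'' to ``$R = \rho(v\tp y)$'', which needs the integrability observation that a gradient field parallel to a fixed direction can only arise from a function of the corresponding linear coordinate. This is cleanest after a linear change of coordinates sending $v$ to $e_n$ --- legitimate because, as explained before the theorem, such a conjugation carries the whole configuration $(h,R,H)$ to another one of the same type and preserves $\deg R$ --- after which the vanishing directional derivatives become $\parder{R}{y_1}=\cdots=\parder{R}{y_{n-1}}=0$, and the conclusion $R = R(y_n)$ is immediate. Everything else is a bookkeeping application of the minimality of $\deg R$.
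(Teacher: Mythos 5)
Your proof is correct and follows essentially the same route as the paper: minimality of $\deg R$ forces every directional derivative $w\tp \grad_y R$ with $w$ perpendicular to the span of $H$ to vanish identically, a linear change of coordinates then turns $R$ into a polynomial in a single linear form $v\tp y$, and since $v\tp \grad h$ is a root of that one-variable polynomial it must be constant, producing a degree-one relation that forces $\deg R = 1$ and $H$ constant. The paper runs the same argument with the matrix $T$ and \eqref{gnrem} from the outset instead of your formulation via $H = g\,v$ and $v^{\perp}$, so the substance is identical; your explicit remarks that $H \ne 0$ and that the case $\deg h \le 1$ needs separate attention are details the paper's own proof passes over silently.
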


\begin{proof}
If $h$ is homogeneous, then \eqref{gradrel} is still satisfied if $R$ is replaced by
any homogeneous component of it. Since $R$ was chosen of minimum degree, we see
that $R$ and hence also $H$ is homogenous if $h$ is homogeneous. 

Assume that the dimension of the linear span of the image of $H$ is at most one.
Then there are $n-1$ independent vectors $c^{(i)} \in \C^n$ such that
$$
(c^{(1)})\tp H = (c^{(2)})\tp H = \cdots = (c^{(n-1)})\tp H = 0
$$
Suppose that $\deg R \ne 1$. Since $(c^{(i)})\tp H = ((c^{(i)}) \tp \grad R) (\grad h)$ 
for each $i$ and $R$ is of minimum degree,
we have $(c^{(i)}) \tp \grad R = 0$ for each $i$. Take an invertible matrix $T$ over 
$\C$ such that the $i$-th column of $(T\tp)^{-1}$ equals $c^{(i)}$ for each $i \le n-1$.
Then $(\jac R) \cdot c^{(i)} = 0$ for each $i \le n-1$. By applying \eqref{gnrem}
with $R(y)$ instead of $h(x)$ and $c_0 = 0$, we obtain 
$\parder{}{y_i} R\big((T\tp)^{-1} y\big) = 0$ for all $i \le n-1$, so
$$
\tilde{R}(y) := R\big((T\tp)^{-1} y\big) \in \C[y_n]
$$
Notice that for $\tilde{h} := h(Tx)$, we have $\tilde{R}(\grad \tilde{h}) = 
R\big((T\tp)^{-1} T\tp \grad h\big)\big|_{x=Tx} = 0$, whence
$$
\parder{}{x_n} \tilde{h}
$$
is algebraic over $\C$. 
This is only possible if this derivative is a constant $c_0$, 
which means that the left hand side of \eqref{gnrem} holds for $i = n$.
Thus $R$ can be chosen of degree one, and since that is the minimum possible degree
for $R$, we have $\deg R = 1$ and $H$ is constant.
\end{proof}

\begin{proof}[Proof of theorem \ref{hesspos}.]
In section \ref{hmgqt}, we will show that for quasi-translations $x + H$ in dimension two and
for homogeneous quasi-translations $x + H$ in dimension three, the linear span of the image of $H$
has dimension at most one. So the case $n \le 3$ of theorem \ref{hesspos} follows from theorem
\ref{regtrans}. Additionally, we will show in section \ref{hmgqt} that for homogeneous 
quasi-translations $x + H$ in dimension $n = 4$, the linear span of the image of $H$ has dimension at
most two. So the case $n = 4$ of theorem \ref{hesspos} follows from theorem
\ref{regtrans} and theorem \ref{regtranshmg} below.
\end{proof}

\begin{theorem} \label{regtranshmg}
Assume $h \in \C[x]$ and $R \ne 0$ satisfies \eqref{gradrel} 
and is of minimum degree. Define $H = (\grad_y R) (\grad h)$.

If $R$ is homogeneous, then the dimension of the linear span of the image of 
$H$ is not equal to two.
\end{theorem}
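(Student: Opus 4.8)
The plan is to argue by contradiction, following closely the proof of Theorem \ref{regtrans}. Suppose the linear span of the image of $H$ has dimension exactly two. Then there are $n-2$ independent vectors $c^{(1)}, \ldots, c^{(n-2)} \in \C^n$ such that $(c^{(i)})\tp H = 0$ for every $i$. Since $(c^{(i)})\tp H = \big((c^{(i)})\tp \grad_y R\big)(\grad h)$, each $(c^{(i)})\tp \grad_y R$ is a polynomial of degree at most $\deg R - 1$ that vanishes after substituting $y = \grad h$. If any of these were nonzero, it would contradict the minimality of $R$ in \eqref{gradrel}; hence $(c^{(i)})\tp \grad_y R = 0$ for each $i$.

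Next I would perform a linear change of coordinates exactly as in Theorem \ref{regtrans}. Choose an invertible matrix $T$ over $\C$ such that the $i$-th column of $(T\tp)^{-1}$ equals $c^{(i)}$ for each $i \le n-2$. Applying \eqref{gnrem} with $R$ in place of $h$ and $c_0 = 0$, the relations $(c^{(i)})\tp \grad_y R = 0$ translate into $\parder{}{y_i} R\big((T\tp)^{-1}y\big) = 0$ for all $i \le n-2$, so that $\tilde R(y) := R\big((T\tp)^{-1}y\big)$ lies in $\C[y_{n-1}, y_n]$. Because $R$ is homogeneous, $\tilde R$ is a homogeneous polynomial in just two variables; and for $\tilde h := h(Tx)$ one checks as in Theorem \ref{regtrans} that $\tilde R(\grad \tilde h) = R\big((T\tp)^{-1} T\tp \grad h\big)\big|_{x=Tx} = 0$. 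Since the span has dimension two, $H$ is not constant, so $\grad_y R$ is nonconstant and therefore $\deg R = \deg \tilde R \ge 2$.

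The decisive step is that a nonzero homogeneous polynomial in two variables over $\C$ splits completely into linear factors, so $\tilde R = \prod_{k=1}^{\deg \tilde R}(\alpha_k y_{n-1} + \beta_k y_n)$ with each $(\alpha_k, \beta_k) \ne (0,0)$. Substituting $y = \grad \tilde h$ makes the whole product vanish in the integral domain $\C[x]$, so some single factor $\alpha_k \parder{}{x_{n-1}}\tilde h + \beta_k \parder{}{x_n}\tilde h$ is already zero. This exhibits a nonzero polynomial of degree one that vanishes after substituting $y = \grad \tilde h$. Since a linear change of coordinates preserves the minimum degree in \eqref{gradrel}, that minimum degree equals $\deg R \ge 2$ for $\tilde h$ as well, contradicting the existence of a degree-one relation. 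Hence the span cannot have dimension two.

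I expect the main subtlety to lie in the careful bookkeeping of the minimality property under the two linear substitutions: both in deducing $(c^{(i)})\tp \grad_y R = 0$ from the minimality of $R$, and in asserting that the minimum degree for $\tilde h$ is still $\deg R$ (together with the verification $\tilde R(\grad\tilde h)=0$). The factorization into linear forms is elementary, but it is precisely the feature special to two variables that makes the statement true — and that is expected to fail once $\tilde R$ depends on three or more variables, which is why the conclusion is phrased as an exclusion of the value two rather than a bound.
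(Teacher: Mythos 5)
Your proof is correct and takes essentially the same approach as the paper: the reduction---using minimality of $\deg R$ and the matrix $T$---to a homogeneous $\tilde{R} \in \C[y_{n-1},y_n]$ with $\tilde{R}(\grad \tilde{h}) = 0$, followed by a contradiction with the minimum degree once a degree-one relation among the partials is produced, is exactly the paper's argument. The only difference is how that linear relation is extracted: the paper divides by $\big(\parder{}{x_n} \tilde{h}\big)^{\deg \tilde{R}}$ and concludes that the quotient of the two partial derivatives is algebraic over $\C$, hence constant, whereas you factor $\tilde{R}$ into linear forms and use that $\C[x]$ is an integral domain---two equivalent ways of exploiting that a homogeneous binary form over $\C$ splits into linear factors.
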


\begin{proof}
Assume that the linear span of the image of $H$ has dimension two. 
Then there are $n-2$ independent vectors $c^{(i)} \in \C^n$ such that
$$
(c^{(1)})\tp H = (c^{(2)})\tp H = \cdots = (c^{(n-2)})\tp H = 0
$$
Since $(c^{(i)})\tp H = ((c^{(i)}) \tp \grad R) (\grad h)$ for each $i$ 
and $R$ is of minimum degree,
we have $(c^{(i)}) \tp \grad R = 0$ for each $i$. Take an invertible matrix $T$ over 
$\C$ such that the $i$-th column of $(T\tp)^{-1}$ equals $c^{(i)}$ for each $i \le n-2$.
By applying \eqref{gnrem} with $R(y)$ instead of $h(x)$ and $c_0 = 0$, 
we see that $\parder{}{y_i} R\big((T\tp)^{-1} y\big) = 0$ for all $i \le n-2$, so
$$
\tilde{R}(y) := R\big((T\tp)^{-1} y\big) \in \C[y_{n-1},y_n]
$$
Notice that for $\tilde{h} := h(Tx)$, we have $\tilde{R}(\grad \tilde{h}) = 0$.

We will show below that Hesse's theorem holds for $\tilde{h}$ and hence also for $h$. 
Consequently, $\deg R = 1$ and $H$ is constant. This contradicts the assumption that 
the linear span of the image of $H$ has dimension two.

More precisely, we shall show that there are $c_{n-1}, c_n \in \C$, not both zero, such that
\begin{equation} \label{hlindep}
c_{n-1} \parder{}{x_{n-1}} \tilde{h} + c_{n} \parder{}{x_{n}} \tilde{h} = 0
\end{equation}
The case where $\parder{}{x_n} \tilde{h} = 0$ is trivial, so assume the opposite. Then
$$
\tilde{R} \bigg(\frac{\parder{}{x_{n-1}} \tilde{h}}{\parder{}{x_n} \tilde{h}},1\bigg)
= \tilde{R} \bigg(\frac{\grad \tilde{h}}{\parder{}{x_n} \tilde{h}}\bigg) 
= \frac{\tilde{R}(\grad \tilde{h})}{\big(\parder{}{x_n} \tilde{h}\big)^{\deg \tilde{R}}} 
 = 0
$$
Thus the quotient of $\parder{}{x_{n-1}} \tilde{h}$ and $\parder{}{x_n} \tilde{h}$
is algebraic over $\C$, which gives \eqref{hlindep}.
\end{proof}

If we take
$$
h = x_3 x_4 \qquad \mbox{and} \qquad R = y_1 y_3 + y_2 y_4
$$
we get $H = (x_4, x_3, 0, 0)$, and the span of image of $H$ has dimension two.
Thus the condition that $R$ has minimum degree is necessary in theorem 
\ref{regtrans}.

\section{Quasi-degrees}

Take $f \in \C[x]$ arbitrary and assume that $c$ is equal to the $i$-th 
standard basis unit vector. Then
$$
f (x + tc)|_{x_i = 0}\big|_{t=x_i} = f(x)
$$ 
for all $i$, whence the degree with respect to $x_i$ of $f$ is equal to
the degree with respect to $t$ of $f(x + tc)$. Based on this property, we define
the {\em quasi-degree} of a polynomial $f \in \C[x]$ with respect to a 
quasi-translation $x + H$ as the degree with respect to $t$ of $f(x + tH)$:
\begin{equation} \label{nudef}
\nu(f) := \deg_t f(x + tH)
\end{equation}
A property of $\nu$ that follows immediately from the definition is the following.
\begin{equation} \label{nuprod}
\nu(fg) = \nu(f) + \nu(g)
\end{equation}
The quasi-degree plays an important role in the study of quasi-translations, which the
proofs of the following propositions make clear.

\begin{proposition} \label{irred}
Assume $x + gH$ is a quasi-translation over $\C$, where $g \in \C[x]$ is nonzero. 
Then $x + H$ is a quasi-translation over $\C$ as well, and $\nu(g) = 0$. Furthermore, 
the invariants of $x + H$ are the same as those of $x + gH$.
\end{proposition}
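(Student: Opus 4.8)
The plan is to reduce everything to the single identity coming from $\jac(gH)\cdot(gH)=0$, and then to extract $\jac g\cdot H = 0$ by a quasi-degree argument. Throughout I may assume $H \neq 0$, since otherwise $x+H$ is the identity and all three claims are immediate.

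First I would expand the hypothesis. By Proposition~\ref{qtprop}, $x+gH$ being a quasi-translation is equivalent to $\jac(gH)\cdot(gH) = (0^1,\ldots,0^n)$. Using $\jac(gH) = H\cdot\jac g + g\,\jac H$ and expanding the product, this becomes $g(\jac g\cdot H)\,H + g^2(\jac H\cdot H) = 0$; dividing by the nonzero $g$ (we work over a domain) gives the key relation
\[
(\jac g\cdot H)\,H + g\,(\jac H\cdot H) = 0 .
\]
Reading off the $j$-th coordinate, this says exactly $\jac(gH_j)\cdot H = 0$, and multiplying by $g$ gives $\jac(gH_j)\cdot(gH)=0$. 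By \eqref{fxtHeqv} applied to the quasi-translation $x+gH$, each $gH_j$ is therefore an invariant of $x+gH$.

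The crux is to upgrade this to $\jac g\cdot H = 0$. Here I would use the quasi-degree with respect to $x+gH$, that is, $\deg_t$ of the substitution $x \mapsto x+tgH$; call it $\nu'$. It satisfies the product rule $\nu'(fg')=\nu'(f)+\nu'(g')$ for the same reason as \eqref{nuprod}, and $\nu'$ is non-negative on nonzero polynomials (the $t^0$-coefficient of $f(x+tgH)$ is $f$). Since $gH_j$ is an invariant, $\nu'(gH_j)=0$, so $\nu'(g)+\nu'(H_j)=0$; choosing $j$ with $H_j\neq 0$ and using non-negativity forces $\nu'(g)=0$. Thus $g$ is an invariant of $x+gH$, which by \eqref{fxtHeqv} means $\jac g\cdot(gH)=0$, hence $\jac g\cdot H=0$.

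With $\jac g\cdot H=0$ in hand the rest is routine. Substituting back into the key relation yields $g(\jac H\cdot H)=0$, so $\jac H\cdot H = 0$ and $x+H$ is a quasi-translation by Proposition~\ref{qtprop}; now that $x+H$ is a quasi-translation, the quasi-degree $\nu$ of the statement is defined, and $\jac g\cdot H=0$ gives $g(x+tH)=g$ via \eqref{fxtHeqv}, i.e.\ $\nu(g)=0$. Finally, for any $f$ the equivalence $\jac f\cdot H=0 \Leftrightarrow \jac f\cdot(gH)=0$ (divide or multiply by the nonzero $g$), combined with \eqref{fxtHeqv} for both $x+H$ and $x+gH$, shows that the two maps have exactly the same invariants. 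I expect the only real obstacle to be the extraction of $\jac g\cdot H=0$; everything before and after is formal manipulation, so the quasi-degree product rule is the pivotal ingredient.
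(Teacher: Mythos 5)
Your proof is correct, but it follows a genuinely different route from the paper's, even though both hinge on the same pivotal ingredient: additivity of $\deg_t$ on a product that is forced to have degree zero in $t$. The paper starts from (1) $\Rightarrow$ (2) of proposition \ref{qtprop} applied to $x+gH$, which gives $H_i(x+tgH)\,g(x+tgH)=H_i g$, and then performs the rational substitution $t=g^{-1}t$ to turn this into $H_i(x+tH)\,g(x+tH)=H_i g$; the product rule \eqref{nuprod}, now for $\deg_t$ along $x+tH$, forces $\deg_t H_i(x+tH)\le 0$, i.e.\ $H(x+tH)=H$, so (2) $\Rightarrow$ (1) finishes, with $\nu(g)=0$ falling out of the same inequality; the invariance statement is then handled by the further substitutions $t=g$ and $t=g^{-1}$. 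You instead work with characterization (3): you expand $\jac(gH)\cdot(gH)=0$ by the Leibniz rule, cancel $g$, recognize the coordinates as $\jac(gH_j)\cdot H=0$, and run the degree argument with respect to the \emph{known} quasi-translation $x+gH$ (your $\nu'$), extracting the intermediate fact $\jac g\cdot H=0$, from which $\jac H\cdot H=0$ and hence (3) $\Rightarrow$ (1) follow; the invariance statement becomes the one-line equivalence $\jac f\cdot H=0\Leftrightarrow\jac f\cdot(gH)=0$ via \eqref{fxtHeqv}. What each buys: the paper's substitutions of rational functions for $t$ make the proof shorter, but they need the (unstated) remark that the identities may be read in $\C(x)[t]$ while both sides remain polynomial, and the paper's $\nu$ is taken along $x+tH$ before $x+H$ is known to be a quasi-translation, so there it is really just a $t$-degree of a substitution; your version stays entirely inside polynomial identities and cancellation in the domain $\C[x]$, is scrupulous about which map each quasi-degree refers to, and isolates the reusable fact that $g$ is an invariant of both maps, at the cost of a somewhat longer argument.
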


\begin{proof}
From (1) $\Rightarrow$ (2) of proposition \ref{qtprop}, we deduce that
$H_i(x+tgH) \cdot g(x+tgH) = H_i \cdot g$. Substituting $t = g^{-1}t$ in it 
and using \eqref{nuprod} and $g \ne 0$, we obtain that
$$
\nu (H_i) \le \nu(g) + \nu (H_i) = \nu (g H_i) \le 0
$$
for each $i$, which is exactly $H(x+tH) = H$. Hence $x + H$ is 
a quasi-translation on account of (2) $\Rightarrow$ (1) of 
proposition \ref{qtprop}.

Thus it remains to be shown that the invariants of $x + H$ and $x + gH$ are the same.
Assume $f$ is an invariant of $x + H$. Then $f(x + tH) = f(x)$ on account of 
\eqref{fxtHeqv}, and by substituting $t = g$ we see that $f$ is an invariant of 
$x + gH$. The converse follows in a similar manner by substituting $t = g^{-1}$.
\end{proof}

Notice that proposition \ref{irred} above gives a tool to obtain quasi-translations
$x + H$ over $\C$ for which $\gcd\{H_1,H_2,\ldots,H_n\} = 1$ from arbitrary
quasi-translations $x + H$ over $\C$.

\begin{proposition} \label{qtconj}
Assume $x + H$ is a quasi-translation in dimension $n$ over $\C$, and 
$F$ is an invertible polynomial map in dimension $n$ over $\C$
with inverse $G$. Then
$$
G \circ (x + H) \circ F
$$
is a quasi-translation as well, if and only if $\nu(G_i) \le 1$ for all $i$.
In particular, if $T$ is an invertible matrix
of size $n$ over $\C$, we have that
$$
x + T^{-1} H(Tx) = T^{-1} \big(Tx + H(Tx)\big) = T^{-1}x \circ H \circ Tx
$$
is a quasi-translation as well.
\end{proposition}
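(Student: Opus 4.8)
The plan is to study the whole one-parameter family $\Phi_t := G \circ (x + tH) \circ F$ at once, rather than only its value $\Phi_1 = G \circ (x+H) \circ F$ at $t = 1$, and to show that $\Phi_1$ is a quasi-translation precisely when $\Phi_t$ is linear in $t$. First I would rewrite the family explicitly: since $(x + tH) \circ F = F + tH(F)$ and composing with $G$ on the left is just substitution, one gets $\Phi_t = G\bigl(F(x) + tH(F(x))\bigr)$, with $\Phi_0 = G \circ F = x$. The first substantial step is to verify that $\{\Phi_t\}_t$ is a one-parameter group, i.e. $\Phi_s \circ \Phi_t = \Phi_{s+t}$. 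Using $F \circ G = x$ one finds $F(\Phi_t(x)) = F(x) + tH(F(x))$, and then property (2) of Proposition \ref{qtprop}, namely $H(x + tH) = H$, applied after the substitution $x \mapsto F(x)$, gives $H\bigl(F(x) + tH(F(x))\bigr) = H(F(x))$; feeding this back shows $\Phi_s(\Phi_t(x)) = G\bigl(F(x) + (s+t)H(F(x))\bigr) = \Phi_{s+t}(x)$. In particular $\Phi_{-1} = \Phi_1^{-1}$.

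Next I would compare the $t$-degrees. Since $\Phi_t$ is, coordinatewise, $G(x + tH)$ with $x \mapsto F(x)$, and substituting $x \mapsto F(x)$ cannot kill the leading $t$-coefficient (its leading coefficient $c(x)$ is a nonzero polynomial, and $c \circ F \ne 0$ because the invertible map $F$ is dominant), I get $\deg_t (\Phi_t)_i = \nu(G_i)$ for each $i$, where $\nu$ is the quasi-degree from \eqref{nudef}. Hence $\Phi_t$ is at most linear in $t$ if and only if $\nu(G_i) \le 1$ for all $i$, and it remains to identify ``$\Phi_t$ linear in $t$'' with ``$\Phi_1$ is a quasi-translation.''

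For the direction where $\Phi_t = x + tK$ is linear, setting $K = \Phi_1 - x$ gives $\Phi_{-1} = x - K$, and the group law yields $(x - K)\circ(x + K) = \Phi_{-1}\circ\Phi_1 = \Phi_0 = x$, which is exactly the definition of a quasi-translation. The converse is the step I expect to be the crux: if $\Phi_1 = x + K$ is a quasi-translation, then by the flow property of quasi-translations (the computation before Lemma \ref{vandermonde}, together with Proposition \ref{qtprop}) its own flow $\Psi_t := x + tK$ is also a one-parameter group with $\Psi_1 = \Phi_1$. Both $\Phi_t$ and $\Psi_t$ are polynomial in $t$, and because $\Phi_1 = \Psi_1$ share the same unique polynomial inverse, iterating gives $\Phi_m = \Psi_m$ for every $m \in \Z$; two polynomials in $t$ agreeing at infinitely many values coincide, so $\Phi_t = \Psi_t = x + tK$ is linear. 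The main obstacle is thus this uniqueness-of-flow argument: the time-one map alone need not determine a flow abstractly, and the point is that here both candidate flows are polynomial one-parameter groups sharing all integer iterates.

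Finally, for the ``in particular'' statement I would take $F = Tx$ and $G = T^{-1}x$, so each $G_i$ is a linear form; then $G_i(x + tH) = G_i(x) + tG_i(H)$ has $t$-degree at most $1$, giving $\nu(G_i) \le 1$ for all $i$, and the displayed map $x + T^{-1}H(Tx) = T^{-1}\bigl(Tx + H(Tx)\bigr) = G \circ (x+H) \circ F$ is a quasi-translation by the equivalence just proved.
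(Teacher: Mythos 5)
Your proof is correct and is in substance the same as the paper's: both directions rest on the invariance $H(x+tH) = H$ from (2) of Proposition \ref{qtconj} (i.e.\@ Proposition \ref{qtprop}), agreement of the relevant maps at integer times $t = m$, and a Vandermonde/interpolation step to pass from integers to the indeterminate $t$. Your explicit group law $\Phi_s \circ \Phi_t = \Phi_{s+t}$ is a tidy repackaging of what the paper does by direct substitution: its trick of setting $x = G(x+mH)$ in the identity for $\tilde{H}$ and telescoping the resulting second-difference recurrence produces exactly the integer-time linearity $G(x+mH) - G(x) = m\big(G(x+H) - G(x)\big)$ that your flow-comparison argument delivers.
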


\begin{proof}
Assume first that $\deg_t G(x+tH) \le 1$ for all $i$. Then we can write
$$
G(x+tH) = G^{(0)} + tG^{(1)}
$$
Notice that $G^{(0)} = G(x+tH)|_{t=0} = G$. Hence
$$
G \circ (x+tH) \circ F = G^{(0)}(F) + t G^{(1)}(F) = G(F) + t G^{(1)}(F) = x + t G^{(1)}(F)
$$
By substituting $t = 1$ on both sides, we obtain that $G \circ (x + H) \circ F
= x + G^{(1)}(F)$ and substituting $t = -1$ tells us that its inverse
$G \circ (x - H) \circ F$ is equal to $x - G^{(1)}(F)$. Thus $G \circ (x + H) \circ F$
is a quasi-translation.

Assume next that $G \circ (x + H) \circ F$ is a quasi-translation $x + \tilde{H}$. Then
$$
\tilde{H} = (G \circ (x + H) \circ F) - x = x - (G \circ (x - H) \circ F)
$$
Substituting $x = G(x + mH)$ in the above gives
$$
G\big(x + mH + H(x + mH)\big) - G(x + mH) = G(x + mH) - G\big(x + mH - H(x + mH)\big)
$$
Since $H(x + mH) = H$ follows by substituting $t = m$ in (2) of proposition 
\ref{qtprop}, we obtain
$$
G(x + (m+1)H) - G(x + mH) = G(x + mH) - G(x + (m-1)H)
$$
By induction on $m$, we get
$G(x + (m+1)H) - G(x + mH) = G(x + H) - G(x)$ for all $m \in \N$, whence
$$
G(x + \tilde{m}H) - G(x) = \sum_{m=0}^{\tilde{m}-1} G(x + (m+1)H) - G(x + mH) 
= \tilde{m} (G(x + H) - G(x))
$$
for all $\tilde{m} \in \N$.
Using similar techniques as in the proof of lemma \ref{vandermonde},
$$
G(x + t H) - G(x) = t (G(x + H) - G(x))
$$
follows. Hence $\deg_t G(x + t H) \le 1$, as desired.
\end{proof}

In example \ref{exampel} below, proposition \ref{qtconj} is used to
make a complicated quasi-translation from a simple one.

\begin{example} \label{exampel}
Let $n = 4$ and take $f = x_1 + x_2 x_4 - x_3^2$, $F = (f,x_2,x_3,x_4)$ and
$G = (2x_1 - f,x_2,x_3,x_4)$. Then $G$ is the inverse of $F$. 
Take $H = (0,x_1,x_1^2,x_1^3)$. 
Then $x + H$ is a quasi-translation which has a simple structure.
Furthermore, $\deg_t f(x+tH) \le \deg f = 2$, and the coefficient of
$t^2$ of $f(x + tH)$ is equal to
$$
H_2 H_4 - H_3^2 = x_1 x_1^3 - x_1^2 x_1^2 = 0
$$
Hence $\deg_t f(x+tH) \le 1$. It follows from proposition \ref{qtconj} that
$$
x + \tilde{H} := G \circ (x + H) \circ F 
$$
is a quasi-translation. Now
\begin{align*}
x_1 + \tilde{H}_1 &= G_1\big(F+H(F)\big) \\
&= G_1(f,x_2+f,x_3+f^2,x_4+f^3) \\
&= 2f - f(f,x_2+f,x_3+f^2,x_4+f^3) \\
&= f - (x_2 x_4 + x_2 f^3 + x_4 f + f^4) + (x_3^2 + 2f^2 x_3 + f^4) \\
&= x_1 - (f^3 x_2 - 2f^2 x_3 + f x_4)
\end{align*}
and
$$
\tilde{H} = \big({-(f^3 x_2 - 2f^2 x_3 + f x_4)},f,f^2,f^3\big)
$$
This quasi-translation has a more complicated structure than the one we started with.
In that fashion, the degrees of the components of $\tilde{H}$ are all different,
whence $x + \tilde{H}$ has no linear invariants.
\end{example}

\section{Quasi-translations with small Jacobian rank} \label{hmgqt}

Proposition \ref{qthmg} below gives a tool to obtain homogeneous quasi-translations 
over $\C$ from arbitrary quasi-translations $x + H$ over $\C$. Hence we can
obtain results about arbitrary quasi-translations by studying homogeneous ones.

\begin{proposition} \label{qthmg}
Assume $x + H$ is a quasi-translation over $\C$ in dimension $n$, and 
$$
d \ge \deg H := \max\{\deg H_1, \deg H_2, \ldots, \deg H_n\}
$$
If we define
$$
\tilde{x} := (x,x_{n+1}) \qquad \mbox{and} \qquad \tilde{H} := x_{n+1}^d \cdot 
\big(H(x_{n+1}^{-1}x), 0\big)
$$
then $\tilde{x} + \tilde{H}$ {\em homogeneous} quasi-translation (of degree $d$) over $\C$ 
in dimension $n + 1$. Furthermore, 
$$
\rk \jac H \le \rk \jac_{\tilde{x}} \tilde{H} \le \rk \jac H + 1
$$
\end{proposition}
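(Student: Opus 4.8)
The plan is to check the three assertions in turn: that $\tilde H$ is a genuine polynomial map whose nonzero components are homogeneous of degree $d$, that $\tilde x + \tilde H$ satisfies condition (3) of Proposition \ref{qtprop}, and finally the two-sided rank estimate. First I would note that $\tilde H_i = x_{n+1}^d\, H_i(x_{n+1}^{-1}x)$ is exactly the degree-$d$ homogenization of $H_i$ in the new variable $x_{n+1}$: a monomial of $H_i$ of degree $e$ is sent to the same monomial in $x_1,\ldots,x_n$ times $x_{n+1}^{d-e}$. Since $\deg H_i \le d$, this is a polynomial, homogeneous of degree $d$ (or zero), while $\tilde H_{n+1}=0$. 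This settles the homogeneity claim and shows that, once $\tilde x + \tilde H$ is known to be a quasi-translation, it is homogeneous of degree $d$.

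Next I would establish the quasi-translation property through $(3)\Rightarrow(1)$ of Proposition \ref{qtprop}, i.e.\ by verifying $\jac_{\tilde x}\tilde H \cdot \tilde H = 0$. Writing $y := x_{n+1}^{-1}x$ and using $\parder{}{x_j} y_k = \delta_{kj}\,x_{n+1}^{-1}$, the chain rule gives for $i,j \le n$ that $\parder{}{x_j}\tilde H_i = x_{n+1}^{d-1}\big(\parder{}{x_j}H_i\big)\big|_{x=y}$. The $(n+1)$-th row of $\jac_{\tilde x}\tilde H$ is zero because $\tilde H_{n+1}=0$, and the $(n+1)$-th entry of $\tilde H$ is zero as well, so the derivatives $\parder{}{x_{n+1}}\tilde H_i$ never enter the product. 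Hence for each $i \le n$ the $i$-th component of $\jac_{\tilde x}\tilde H \cdot \tilde H$ collapses to $x_{n+1}^{2d-1}\,(\jac H \cdot H)_i\big|_{x=y}$, which vanishes since $x+H$ is a quasi-translation, and the $(n+1)$-th component vanishes by the zero bottom row.

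For the rank estimate I would exploit the block shape of $\jac_{\tilde x}\tilde H$: its bottom row is zero, its top-left $n\times n$ block is $x_{n+1}^{d-1}(\jac H)|_{x=y}$, and its last column is some vector $v$. Deleting the zero row leaves the rank unchanged, so $\rk \jac_{\tilde x}\tilde H$ equals the rank of the $n\times(n+1)$ matrix $[\,x_{n+1}^{d-1}(\jac H)|_{x=y} \mid v\,]$. Deleting the extra column $v$ can lower the rank by at most one, which yields the two inequalities with $\rk\big[\,x_{n+1}^{d-1}(\jac H)|_{x=y}\,\big]$ in the middle. It then remains to identify this middle rank with $\rk \jac H$: the scalar $x_{n+1}^{d-1}$ is a nonzero element of the fraction field and is irrelevant to rank, and the substitution $x_i \mapsto x_i/x_{n+1}$, being well defined because $x_1/x_{n+1},\ldots,x_n/x_{n+1}$ are algebraically independent over $\C$, induces an injective field homomorphism $\C(x_1,\ldots,x_n)\hookrightarrow\C(x_1,\ldots,x_{n+1})$.

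I anticipate that the only genuinely delicate point is this last rank-preservation step. The argument must be run over fraction fields, computing rank as the largest size of a nonvanishing minor rather than pointwise; since an injective field homomorphism sends a nonzero minor to a nonzero minor and a zero minor to a zero minor, it preserves exactly which minors vanish and hence preserves the rank, giving $\rk (\jac H)|_{x=y} = \rk \jac H$. Everything else reduces to the chain-rule computation above together with elementary linear algebra on a matrix possessing a zero row and one adjoined column.
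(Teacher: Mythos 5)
Your proof is correct. For the quasi-translation claim you take essentially the paper's route: both arguments verify $\jac_{\tilde{x}}\tilde{H}\cdot\tilde{H} = 0$ (condition (3) of Proposition \ref{qtprop}) by noting that the zero last component of $\tilde{H}$ and the zero last row of its Jacobian make the extra row and column harmless, then pulling out $x_{n+1}^{2d-1}$ and applying the chain rule to reduce everything to $\jac H \cdot H = 0$. Where you genuinely diverge is the rank estimate. The paper converts it into a statement about transcendence degrees via the identity $\rk \jac H = \trdeg_{\C}\C(H)$: the lower bound comes from specializing an algebraic relation $R(\tilde{H})=0$ at $x_{n+1}=1$ to obtain $R(H)=0$, and the upper bound from homogenizing a relation $R(H)=0$ of degree $r$ into $y_{n+1}^{r}R(y_{n+1}^{-1}y)$, which annihilates $(\tilde{H},x_{n+1}^{d})$, so that $\trdeg_{\C}\C(\tilde{H}) \le \trdeg_{\C}\C(H)+1$. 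You instead argue by pure linear algebra over the fraction field: after discarding the zero bottom row, $\jac_{\tilde{x}}\tilde{H}$ becomes the $n\times(n+1)$ matrix $[\,x_{n+1}^{d-1}(\jac H)|_{x=x_{n+1}^{-1}x}\mid v\,]$, whose rank is squeezed between the rank of the left block and that rank plus one, and the left block has the same rank as $\jac H$ because multiplication by a nonzero field element and the injective field embedding $x_i\mapsto x_i/x_{n+1}$ both preserve which minors vanish. Your version is more elementary and self-contained, since it avoids invoking the external fact $\rk\jac H=\trdeg_{\C}\C(H)$ (which the paper cites from the literature); the paper's version needs no matrix bookkeeping and stays inside the algebraic-dependence formalism it uses throughout. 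The one delicate point in your approach, rank-invariance under the substitution $x\mapsto x_{n+1}^{-1}x$, is exactly the point you flag, and your justification of it via nonvanishing minors is sound.
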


\begin{proof}
Notice that $\tilde{H}$ is indeed polynomial and homogeneous of degree $d$.
\begin{enumerate}[(i)]

\item We show that $\tilde{x} + \tilde{H}$ is a quasi-translation 
in dimension $n+1$ over $\C$. On account of (3) $\Rightarrow$ (1) of 
proposition \ref{qtprop}, it suffices to show that 
$\jac_{\tilde{x}} \tilde{H} \cdot \tilde{H} = (0^1,0^2,\ldots,\allowbreak 0^{n+1})$. 
Since $\tilde{H}_{n+1} = 0$, this is equivalent to 
$$
\jac \tilde{H} \cdot x_{n+1}^d H(x_{n+1}^{-1}x) = (0^1,0^2,\ldots,0^{n+1})
$$
Using $\jac \tilde{H}_{n+1} = 0$ and factoring out $x_{n+1}^{2d-1}$, we see that
it suffices to show that 
$$
\jac \big(H(x_{n+1}^{-1}x)\big) \cdot x_{n+1} H(x_{n+1}^{-1}x) = (0^1,0^2,\ldots,0^n)
$$
This is indeed the case, because the chain rule tells us that
\begin{align*}
(0^1,0^2,\ldots,0^n) &= (\jac H \cdot H)_{x = x_{n+1}^{-1}x} \\
  &= (\jac H \cdot x_{n+1}^{-1} \cdot x_{n+1} H)_{x = x_{n+1}^{-1}x} \\
  &= \jac \big(H(x_{n+1}^{-1}x)\big) \cdot x_{n+1} H(x_{n+1}^{-1}x) \qedhere
\end{align*}

\item We show that $\rk \jac H \le \rk \jac_{\tilde{x}} \tilde{H} \le \rk \jac H + 1$. 
It is known that $\rk \jac H = \trdeg_{\C} \C(H)$, see e.g.\@ Proposition 1.2.9 of either 
\cite{arnobook} of \cite{homokema}. Hence it suffices to show that 
$\trdeg_{\C} \C(H) \le \trdeg_{\C} \C(\tilde{H}) \le \trdeg_{\C} \C(H) + 1$.

If $R(\tilde{H}) = 0$ for some polynomial $R \in \C[y]$, then substituting $x_{n+1} = 1$ gives
$R(H) = 0$. Hence $\trdeg_{\C} \C(H) \le \trdeg_{\C} \C(\tilde{H})$.
If $R(H) = 0$ for some polynomial $R \in \C[y]$, say of degree $r$, then
$\tilde{R} := y_{n+1}^r R(y_{n+1}^{-1} y)$ satisfies $\tilde{R}(\tilde{H},x_{n+1}^d) = 0$.
Hence $\trdeg_{\C} \C(\tilde{H}) \le \trdeg_{\C} \C(H) + 1$. \qedhere

\end{enumerate}
\end{proof}

Gordan and N{\"o}ther used techniques of algebraic geometry to obtain results
about homogeneous quasi-translation. They found the following property of 
homogeneous quasi-translations $x + H$:
\begin{equation} \label{Htp}
H(tH) = 0
\end{equation}
This equality can be obtained by looking at the leading coefficient of $t$
in $H(x + tH) = H$, which is (2) of proposition \ref{qtprop}. 

Below we will give algebraic proofs of the results of Gordan and N{\"o}ther about 
homogeneous quasi-translations with Jacobian rank at most $2$, especially theorem 
\ref{qthrk2} below.

\begin{theorem} \label{qthrk1}
Assume $x + H$ is a homogeneous quasi-translation in dimension $n$ over $\C$.
Then $\rk \jac H \le 1$, if and only if $H = gc$ for some $g \in \C[x]$ and
a vector $c \in \C^n$. If $H$ is not of the above form, then $2 \le \rk \jac H \le n-2$. 
In particular, $n \ge 4$ in that case.
\end{theorem}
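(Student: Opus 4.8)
\emph{The two easy directions and the lower bound.} If $H = gc$ with $g \in \C[x]$ and $c \in \C^n$ constant, then $\jac H = c \cdot (\jac g)$ is an outer product, so $\rk \jac H \le 1$; this needs neither homogeneity nor the quasi-translation property. For the converse I would show that a \emph{homogeneous} map $H$ of positive degree $d$ with $\rk \jac H = 1$ is automatically of the form $gc$. Write $\jac H = u\,w$ as a rank-one product over the field $\C(x)$, with $u$ an $n\times 1$ and $w$ a nonzero $1\times n$ matrix; then $\grad H_i = u_i\, w\tp$ for every $i$, so all gradients are parallel. Euler's identity $\jac H \cdot x = dH$ gives $H = d^{-1}(w\cdot x)\,u$, hence $H_i = d^{-1}(w\cdot x)\,u_i$; choosing $k$ with $H_k \neq 0$, a direct computation gives $\grad(H_i/H_k) = H_k^{-2}(H_k u_i - H_i u_k)\,w\tp = 0$, so $H_i/H_k \in \C$ because $\C(x)$ has characteristic zero. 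Thus $H = H_k\, c$ with $c = (H_1/H_k,\ldots,H_n/H_k) \in \C^n$. The degenerate case $\rk \jac H = 0$ forces each $H_i$ constant, again of the form $gc$. Taking the contrapositive, if $H$ is not of the form $gc$ then $\rk \jac H \ge 2$, the asserted lower bound.

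\emph{The upper bound.} Since $\jac H \cdot H = 0$ by (3) of proposition \ref{qtprop} and $H \neq 0$ (as $H$ is not $gc$), the column vector $H$ lies in the kernel of $\jac H$ over $\C(x)$, so $\rk \jac H \le n-1$; the whole difficulty is to exclude $\rk \jac H = n-1$. The key geometric lemma I would prove is: \emph{for a homogeneous quasi-translation with $\rk \jac H = n-1$, the polynomial $\gcd(H_1,\ldots,H_n)$ is nonconstant.} Its proof uses $H(H) = 0$, which is \eqref{Htp} read off as the top $t$-coefficient of $H(x+tH)=H$. This says $\mathrm{Im}(H)$ is contained in the common zero locus $Z = \{y : H(y)=0\}$. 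The closure $V = \overline{\mathrm{Im}(H)}$ is irreducible of dimension $\trdeg_\C \C(H) = \rk \jac H = n-1$, hence an irreducible hypersurface $\{P = 0\}$ with $P$ nonconstant; since $V \subseteq Z \subseteq \{H_i = 0\}$, each $H_i$ vanishes on $V$, so $P \mid H_i$ for all $i$ by the Nullstellensatz, and therefore $P \mid \gcd(H_i)$.

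\emph{The descent.} With this lemma I would finish as follows. We may assume $\gcd(H_1,\ldots,H_n)=1$: writing $g = \gcd(H_i)$ and $H' = H/g$, proposition \ref{irred} shows $x+H'$ is again a homogeneous quasi-translation, and it is still not of the form $gc$, since its components are coprime and nonconstant. Moreover $\rk \jac H' = \rk \jac H$: the values $g(x)H'(x)$ and $H'(x)$ are proportional, so $\mathrm{Im}(H)$ and $\mathrm{Im}(H')$ have the same projectivization $W \subseteq \PP^{n-1}$, and since both maps are homogeneous of positive degree, $\trdeg_\C \C(H) = \dim W + 1 = \trdeg_\C \C(H')$. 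After this reduction we have a homogeneous quasi-translation with $\rk \jac H = n-1$ and $\gcd(H_i)=1$, directly contradicting the key lemma. Hence $\rk \jac H \neq n-1$, so $2 \le \rk \jac H \le n-2$; in particular $n \ge 4$. I expect the main obstacle to be this upper bound, and within it the geometric lemma together with the verification that dividing out the gcd leaves the Jacobian rank unchanged.
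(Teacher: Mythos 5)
Your proof is correct; it reaches the paper's conclusion with the same core idea on the hard step but diverges in two places. For the direction $\rk \jac H \le 1 \Rightarrow H = gc$, you factor $\jac H = u\,w$ over $\C(x)$ and combine this with Euler's identity to get $\grad (H_i/H_k) = 0$; the paper instead invokes $\rk \jac H = \trdeg_{\C}\C(H) = 1$ to produce a nonzero \emph{homogeneous} relation $R(H_i,H_j) = 0$ and then compares, for each irreducible $f$, how many times $f$ divides $H_i$ and $H_j$. Your route is the more elementary one (it does not even need the rank--transcendence-degree theorem for this step); like the paper's, it uses only homogeneity, not the quasi-translation property. For the bound $\rk \jac H \le n-2$, your ``key geometric lemma'' is the paper's argument in geometric clothing: your irreducible hypersurface $V = \{P=0\}$ containing $\mathrm{Im}(H)$ is precisely the paper's observation that the relation ideal $\p = \{R \in \C[y] : R(H) = 0\}$ is a height-one prime of the UFD $\C[y]$, hence principal, and both proofs feed $H(H)=0$, i.e.\@ \eqref{Htp}, into the Nullstellensatz to make $P$ divide every component. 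The real organizational difference is where the gcd enters: you divide out $g = \gcd\{H_1,\ldots,H_n\}$ first (via proposition \ref{irred}) and must then verify $\rk \jac(g^{-1}H) = \rk \jac H$ --- the one step you argue informally, through projectivized images (strictly, the two projectivized images coincide only after taking closures, which is all the dimension count needs); this can be made crisp exactly as the paper itself does inside the proof of theorem \ref{qthrk2}, since $R(H) = g^{\deg R}R(g^{-1}H)$ for homogeneous $R$ and relations among components of a homogeneous map are generated by homogeneous ones. The paper's proof of the present theorem avoids needing rank invariance altogether: it never replaces $H$, but instead notes that the reduced components $g^{-1}H_i$, viewed as polynomials in $y$, satisfy $(g^{-1}H_i)(H) = 0$, hence lie in $\p$ and are divisible by its generator, contradicting that their gcd is $1$. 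So your descent is more modular and yields a reusable rank-invariance fact, while the paper's insertion trick is shorter. A small point in your favour: your remark that $\jac H \cdot H = 0$ and $H \ne 0$ force $\rk \jac H \le n-1$ explicitly rules out rank $n$, which the paper leaves implicit.
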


\begin{proof}
If $H = gc$, then $\jac H = c \cdot \jac g$ and therefore $\rk \jac H \le 1$.
On the other hand, if $\rk \jac H = 0$, then $H = 0 = gc$ for some $g \in \C[x]$ and
a vector $c \in \C^n$, thus assume that $\rk \jac H = 1$. Then there exists
a $j \le n$ such that $g := H_j \ne 0$. If for all $i \le n$ and for each $f \in \C[x]$, 
$f$ divides $H_i$ at least as many times as it divides $g$, then $H = gc$ for some 
$g \in \C[x]$ and a vector $c \in \C^n$. So take any $i \le n$ and any $f \in \C[x]$.

It is known that $\rk \jac H = \trdeg_{\C} \C(H)$, see e.g.\@ Proposition 1.2.9 of either 
\cite{arnobook} of \cite{homokema}. Since $\trdeg_{\C} \C(H) = \rk \jac H = 1$, there
exists a nonzero polynomial $R \in \C[y_1,y_2]$ such that $R(H_i,H_j) = 0$.
Since $H$ is homogeneous, we can replace $R$ by one of its homogeneous components, 
so we may assume that $R$ is homogeneous. If $f$ divides $H_i$ fewer times than it 
divides $g = H_j$, then the number of times that $f$ divides $R(H_i,H_j)$ is determined 
by the nonzero term of lowest degree with respect to $y_2$ of $R$, which leads to a
contradiction. So $H = gc$ for some $g \in \C[x]$ and a vector $c \in \C^n$, if and only
if $\rk \jac H \le 1$.

Assume next that $H$ is not of the form $gc$ for any $g \in \C[x]$ and any vector 
$c \in \C^n$. Let $g = \gcd\{H_1,H_2,\ldots,H_n\}$ and define $\tilde{H} = g^{-1}H$.
On account of proposition \ref{irred}, $x + \tilde{H}$ is a homogenous quasi-translation 
as well, and by assumption, there is no $c \in \C^n$ such that $\tilde{H} = c$, i.e.\@
$\tilde{H}$ is not constant. From \eqref{Htp}, we deduce that $\tilde{H}_i(\tilde{H}) = 0$ 
for each $i$. Hence also $\tilde{H}_i(H) = 0$ for each $i$. 

Now suppose that $\rk \jac H = n-1$. Then $\trdeg_{\C} \C(H) = n-1$ as well, so the prime ideal 
$\p := \{ R \in \C[y] \mid R(H) = 0\}$ has height $n - (n-1) = 1$. Since $\C[y]$ is a unique 
factorization domain, it follows that $\p$ is principal.
This contradicts $\gcd\{\tilde{H}_1,\tilde{H}_2,\ldots, \tilde{H}_n\} = 1$, because $\tilde{H}_i \in \p$
for each $i$. So $2 \le \rk \jac H \le n-2$ if $H$ is not of the form $gc$ for some $g \in \C[x]$ and a 
vector $c \in \C^n$.
\end{proof}

Theorem \ref{qthrk2} is somewhat deeper and based on techniques in the paper \cite{gornoet} by 
Gordan and N{\"o}ther, see also \cite{gnlossen}. \cite[Th.\@ 3.6 (iii)]{hmgqt5dim} contains another
proof of the assertion that $s \ge 2$ in theorem \ref{qthrk2} below, which is sufficient to 
obtain theorem \ref{hesspos}.

\begin{theorem}[Gordan and N{\"o}ther] \label{qthrk2}
Assume $x + H$ is a homogeneous quasi-translation of degree $d$ over $\C$ such that 
$\rk \jac H = 2$. Then the linear span of the image of $H$ has dimension $n-2$ at most.

More precisely, if $H_1 = H_2 = \cdots = H_s = 0$ and $H_{s+1}, H_{s+2}, \ldots, H_n$
are linearly independent over $\C$, then $s \ge 2$ and $g^{-1} H_i \in \C[x_1,x_2,\ldots,x_s]$ 
for each $i$, where $g = \gcd\{H_1, H_2, \ldots, H_n\}$.
\end{theorem}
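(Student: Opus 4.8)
The plan is to normalize $H$ first, extract the structural identities that pin down the two-dimensional image, and isolate the genuinely hard Gordan--N\"other core for last.

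\medskip
\noindent\emph{Reductions.} Set $g = \gcd\{H_1,\ldots,H_n\}$ and $\tilde{H} = g^{-1}H$. By proposition \ref{irred}, $x + \tilde{H}$ is again a homogeneous quasi-translation, now with $\gcd\{\tilde{H}_1,\ldots,\tilde{H}_n\} = 1$, and it has the same invariants as $x + H$, so $\rk \jac \tilde{H} = \rk \jac H = 2$. The dimension of the linear span of the image of $\tilde H$ equals $n$ minus the dimension of the space of linear relations $\sum_i c_i \tilde{H}_i = 0$, and this dimension is unchanged under conjugation by an invertible matrix $T$, which by proposition \ref{qtconj} again produces a quasi-translation and preserves the rank. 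Hence I may assume $\tilde{H}_1 = \cdots = \tilde{H}_s = 0$ and $\tilde{H}_{s+1},\ldots,\tilde{H}_n$ linearly independent over $\C$, so the span has dimension exactly $n-s$. Thus the first assertion is equivalent to $s \ge 2$, and it suffices to prove the ``more precise'' statement for $\tilde{H}$: that $s \ge 2$ and $\tilde{H}_i \in \C[x_1,\ldots,x_s]$ for all $i$.

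\medskip
\noindent\emph{Key identities.} The equation $\jac \tilde{H}\cdot \tilde{H} = 0$ reads $\sum_j (\partial_{x_j}\tilde{H}_i)\,\tilde{H}_j = 0$ for each $i$, and since $\tilde{H}_j = 0$ for $j \le s$ the associated derivation is $D = \sum_{j>s}\tilde{H}_j\,\partial_{x_j}$. Consequently $x_1,\ldots,x_s \in \ker D$ and $D\tilde{H}_i = 0$ for all $i$, i.e.\ each $\tilde{H}_i$ is an invariant. Crucially, $D$ differentiates only $x_{s+1},\ldots,x_n$, so $\phi := (\tilde{H}_{s+1},\ldots,\tilde{H}_n)$ satisfies $\jac_{(x_{s+1},\ldots,x_n)}\phi \cdot \phi = 0$; that is, $(x_{s+1},\ldots,x_n) + \phi$ is a quasi-translation in the variables $x_{s+1},\ldots,x_n$ over the field $\C(x_1,\ldots,x_s)$. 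Proving $\tilde{H}_i \in \C[x_1,\ldots,x_s]$ is exactly proving that this partial Jacobian vanishes, i.e.\ that the partial quasi-translation is trivial. In addition, \eqref{Htp} gives $\tilde{H}(\tilde{H}) = 0$, which reads $F_i(\tilde{H}_{s+1},\ldots,\tilde{H}_n) = 0$ for every $i$, where $F_i := \tilde{H}_i|_{x_1=\cdots=x_s=0}$; and the nilpotency of the rank-two matrix $\jac \tilde{H}$ recorded in \eqref{qtnilp} is available.

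\medskip
\noindent\emph{The crux and main obstacle.} For the polynomial structure I would argue by descent on $n$: grading $\tilde{H}_i$ by its degree in $x_1,\ldots,x_s$ and extracting the lowest piece, the degree-zero part $\phi^{(0)} = (F_{s+1},\ldots,F_n)$ is a homogeneous quasi-translation over $\C$ in the $n-s$ variables $x_{s+1},\ldots,x_n$, and the degree-zero part of $F_i(\phi)=0$ gives $F_i(\phi^{(0)}) = 0$, so once $s \ge 1$ is known the inductive hypothesis applies to $\phi^{(0)}$, with the rank-one classification of theorem \ref{qthrk1} as base case. Throughout I track the relation ideal $\p = \{R : R(\tilde{H}) = 0\}$, which is prime of height $n-2$ and contains every $\tilde{H}_i$ while $\gcd\{\tilde{H}_i\} = 1$; this height-versus-gcd tension is what should upgrade $s \ge 1$ to $s \ge 2$. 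The main obstacle---and the entire depth of Gordan--N\"other---is controlling the coupling between the variable groups $x_{\le s}$ and $x_{>s}$, so as to rule out that $\phi$ has rank one or two over $\C(x_1,\ldots,x_s)$. Geometrically this is the assertion that the fibres of the projectivized map $[\tilde{H}] : \PP^{n-1} \dashrightarrow C$ onto the image curve $C$ of the two-dimensional cone $V = \overline{\operatorname{im}\tilde{H}}$ are linear subspaces; this linearity of the fibres, combined with $\tilde{H}|_V = 0$, is the hard kernel of the argument, and it is precisely the step that fails for $n \ge 5$ in Hesse's theorem but can be pushed through here under the hypothesis $\rk \jac H = 2$.
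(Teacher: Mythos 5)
Your reductions (dividing out $g$ via proposition \ref{irred}, conjugating via proposition \ref{qtconj} so that $H_1=\cdots=H_s=0$ and the remaining components are linearly independent) and your setup identities (the derivation $D$, the invariance of each $\tilde H_i$, the relation $\tilde H(\tilde H)=0$ from \eqref{Htp}) coincide with the opening paragraphs of the paper's proof, and your observation that everything reduces to showing $\tilde H_i \in \C[x_1,\ldots,x_s]$ is correct. But precisely there your proposal stops: the paragraph headed ``the crux and main obstacle'' names the essential difficulty (triviality of the partial quasi-translation over $\C(x_1,\ldots,x_s)$, equivalently linearity of the fibres of the projectivized map) without proving it. That assertion \emph{is} the theorem; everything preceding it is routine normalization, so this is a genuine gap, and it is the entire content of Gordan and N{\"o}ther's argument.

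The inductive scheme you sketch in its place does not close the gap. First, it is circular: to descend to the restriction $\phi^{(0)} = \tilde H|_{x_1=\cdots=x_s=0}$ you need $s \ge 1$, which is itself part of what must be proved (if $s=0$ nothing descends), and no independent argument for $s\ge 1$ is given --- the ``height-versus-gcd tension'' you invoke is what the paper uses in theorem \ref{qthrk1} to bound $\rk \jac H \le n-2$, not to bound $s$; in the paper, $s \ge 2$ is deduced \emph{from} the membership statement, not before it. Second, even granting $s \ge 1$ and an inductive conclusion for $\phi^{(0)}$, that conclusion only constrains $\tilde H$ on the subspace $\{x_1=\cdots=x_s=0\}$; it says nothing about how $\tilde H_i$ depends on $x_{s+1},\ldots,x_n$ at points where $x_1,\ldots,x_s$ do not vanish, which is exactly what membership in $\C[x_1,\ldots,x_s]$ requires, so the inductive information cannot be transported back to $\tilde H$. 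For comparison, the paper's proof of this step runs through lemma \ref{phiL}: the image of $H$ is a union of lines through the origin; each such line $L$ carries a square-free polynomial $\phi^{(L)}$ with $H^{-1}(L\setminus\{0\}^n) \subseteq V(\phi^{(L)}) \subseteq H^{-1}(L)$; a linear subspace containing infinitely many such lines contains the whole image. Then, for a linear form $\alpha\tp y$ generic in the sense that it avoids the finitely many ``exclusive'' lines, the Nullstellensatz factors the square-free part of $\alpha\tp H$ into finitely many $\phi^{(L_k)}$, and a divisibility and degree-comparison argument (using $\jac \phi^{(L_k)} \cdot H = 0$, which comes from \eqref{fxtHeqv}) forces each $\phi^{(L_k)}$, hence generic $\alpha\tp H$, hence every $H_i$, into $\C[x_1,\ldots,x_s]$. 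Some argument of this strength is what your proposal would need to supply.
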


\begin{proof}
Let $g := \gcd\{H_1, H_2, \ldots, H_n\}$, and write $V(H)$ and $V(g^{-1} H)$ for the common 
zeros of $H_1,H_2,\ldots,H_n$ and $g^{-1}H_1,g^{-1}H_2,\ldots,g^{-1}H_n$ respectively. 
By replacing $H$ by $T^{-1}H(Tx)$ for a suitable $T \in \GL_n(\C)$, we can obtain that
$H_1 = H_2 = \cdots = H_s = 0$ and that $H_{s+1}, H_{s+2}, \ldots, H_n$ are linearly independent 
over $\C$. On account of proposition \ref{qtconj}, $x + H$ stays a quasi-translation. Furthermore, 
the equality $\rk \jac H = 2$ and the dimension of the linear span of the image of $H$ are preserved.

So we may assume that $H_1 = H_2 = \cdots = H_s = 0$ and $H_{s+1}, H_{s+2}, \ldots, H_n$ are 
linearly independent over $\C$ for some $s \ge 0$. On account of theorem \ref{qthrk1}, $g^{-1} H$ 
is not constant because $\rk \jac H = 2$. It is known that $\rk \jac H = \trdeg_{\C} \C(H)$, see 
e.g.\@ Proposition 1.2.9 of either \cite{arnobook} of \cite{homokema}. 
Since $R(H) = 0 \Leftrightarrow R(g^{-1}H) = 0$ for homogeneous 
and hence any $R \in \C[y]$, it follows that 
$$
\rk \jac (g^{-1}H) = \trdeg_{\C} \C(g^{-1}H) = \trdeg_{\C} \C(H) = \rk \jac H = 2
$$
Furthermore,
$g^{-1}H_{s+1}, g^{-1}H_{s+2}, \ldots, g^{-1}H_n$ are linearly independent over $\C$ as well.
On account of proposition \ref{irred}, we may assume that $g = 1$ and therefore $\dim V(H) \le n-2$. 

By theorem \ref{qthrk1}, we deduce from $\rk \jac H = 2$ that there is an $i$ such that
$x_1^d \nmid H_i$. So if $H_i \in \C[x_1,x_2,\ldots,x_s]$ for all $i$, then $s \ge 2$.
We prove $H_i \in \C[x_1,x_2,\ldots,x_s]$ for all $i$ by showing that generic linear combinations of 
$H_1, H_2, \allowbreak \ldots, H_n$ are contained in $\C[x_1,x_2,\ldots,x_s]$. 
The genericity condition on the linear combinations $\alpha\tp H$ with $\alpha \in \C^n$,
is that the intersection of the hyperplane of zeroes of $\alpha\tp y$ with a fixed finite set of
so-called exclusive lines through the origin must be trivial. 

From lemma \ref{phiL} below, it follows that each line $L$ through the origin and another 
point in the image of $H$ lies entirely in the image of $H$. Furthermore, for each such line 
$L$, there exists a polynomial $\phi^{(L)}$, which we can take square-free,
such that $H^{-1}(L \setminus \{0\}^n)
\subseteq V(\phi^{(L)}) \subseteq H^{-1}(L)$ on account of (i) of lemma \ref{phiL}, where 
$V(\phi^{(L)})$ is the set of zeroes of $\phi^{(L)}$. We call a line
$L$ in the image of $H$ {\em exclusive}, if there are
only finitely many other such lines $L'$ for which $\deg \phi^{(L)} \le \deg \phi^{(L')}$.
By considering an exclusive line $L$ for which $\deg \phi^{(L)}$ is minimum, we see that 
there can only be finitely many exclusive lines indeed. 

So let us assume the genericity condition that the set $S$ of zeroes of $\alpha\tp y$ intersects 
all exclusive lines in the origin only. If $S$ contains infinitely many lines $L$ 
in the image of $H$, then it follows from (ii) of lemma \ref{phiL} below that $S$ contains 
the image of $H$ as a whole, so that $\alpha\tp H = 0 \in \C[x_1,x_2,\ldots,x_s]$. 
So assume that $S$ contains only finitely many lines $L$ in the image of $H$, say 
$L_1, L_2, \ldots, L_m$, where $m \ge 0$.

Notice that a zero of $\alpha\tp H$ is a zero of either $V(H)$ or $\phi^{(L_k)}$ for some $k \le m$.
From the Nullstellensatz, it follows that the squarefree part $\sigma$ of $\alpha\tp H$
is a divisor of $H_i \phi^{(L_1)} \phi^{(L_2)} \cdots \phi^{(L_m)}$ for each $i$. Since $g = 1$,
we deduce that $\sigma$ is already a divisor of $\phi^{(L_1)} \phi^{(L_2)} \cdots \phi^{(L_m)}$. 
It suffices to prove that $\sigma \in \C[x_1,x_2,\ldots,x_s]$ and we will do that by showing 
that $\phi^{(L_k)} \in \C[x_1,x_2,\ldots,x_s]$ for each $k$.

Since $L_k$ is not exclusive, there are infinitely many lines $L' \ne L_k$ 
in the image of $H$, for which $\deg \phi^{(L_k)} \le \deg \phi^{(L')}$. 
Hence it follows from (ii) of lemma \ref{phiL} below that the linear span 
$S'$ of these lines $L'$ contains the image of $H$ as a whole. So $S' = \{0\}^s \times \C^{n-s}$ 
by definition of $s$.

In order to prove that $\phi^{(L_k)} \in \C[x_1,x_2,\ldots,x_s]$, it suffices to show that
$\jac \phi^{(L_k)} \cdot e_i = 0$ for all $i > s$. So take any $i > s$. Since $S'$ contains the image 
of $H$, we can write $e_i$ as a linear combination of several $c' \in S'$ such that $c' \in L'$
for some line $L' \ne L_k$ in the image of $H$, 
for which $\deg \phi^{(L_k)} \le \deg \phi^{(L')}$. Hence it suffices to show that
$\jac \phi^{(L_k)} \cdot c' = 0$ for every such $c'$. We will prove below that
\begin{equation} \label{eq1}
\phi^{(L')} \mid H_i \cdot \jac \phi^{(L_k)} \cdot c' 
\end{equation}
for each $i$ and every pair $c' \in L'$ as above. Since $g = 1$, 
we see that \eqref{eq1} implies $\phi^{(L')} \mid \jac \phi^{(L_k)} \cdot c'$, which gives
$\jac \phi^{(L_k)} \cdot c' = 0$ because $\deg \phi^{(L_k)} \le \deg \phi^{(L')}$.

So it remains to prove \eqref{eq1}.  Since $\phi^{(L_k)}$ is square-free and 
$V(\phi^{(L_k)}) \subseteq H^{-1}(L_k) \subseteq H^{-1}(S)$, it follows from the 
Nullstellensatz that $\phi^{(L_k)} \mid \alpha\tp H$. Hence $\deg_t \phi^{(L_k)} (x + tH)
\le \deg_t (\alpha\tp H (x + tH)) = 0$. So $\phi^{(L_k)} (x + t H) = \phi^{(L_k)}(x)$. On
account of \eqref{fxtHeqv} in proposition \ref{qtprop},
\begin{equation} \label{eq2}
\jac \phi^{(L_k)} \cdot H = 0
\end{equation}
Let $\theta \in V(\phi^{(L')})$ and $c := H(\theta)$. If $c$ is the zero vector, then 
$H_i(\theta) = 0$. If $c$ is not the zero vector, then $c \in L' \setminus \{0\}$, 
so $c'$ is a scalar multiple of $c$, and $(\jac \phi^{(L_k)})|_{x = \theta} \cdot c' = 0$ 
on account of \eqref{eq2} in this case. Consequently, $\theta$ is a zero of 
$H_i \cdot \jac \phi^{(L_k)} \cdot c'$ in any case. Now \eqref{eq1} follows from the 
Nullstellensatz, because $\theta$ was an arbitrary zero of $\phi^{(L')}$, which is square-free.
\end{proof}

\begin{lemma} \label{phiL}
Let $H$ be a homogeneous polynomial map of degree $d$ over $\C$, such that $\rk \jac H = 2$. 
Then the image of $H$ consists of a union of lines through the origin.
\begin{enumerate}[\upshape (i)] 
 
\item For each line $L$ through the origin in the image of $H$, there exists a polynomial 
$\phi^{(L)}$ such that $H^{-1}(L \setminus \{0\}) \subseteq V(\phi^{(L)}) \subseteq H^{-1}(L)$.

\item If a linear subspace $S$ of $\C^n$ contains infinitely many lines through the origin 
in the image of $H$, then $S$ contains the image of $H$ as a whole.

\end{enumerate}
\end{lemma}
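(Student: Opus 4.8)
The plan is to work projectively. First I would record the geometric setup. Since $H$ is homogeneous of degree $d$ and $\C$ is algebraically closed, every scalar is a $d$-th power, so $\mu H(\theta)=H(\lambda\theta)$ whenever $\lambda^d=\mu$; hence the image of $H$ is a cone, i.e.\ a union of lines through the origin, which already gives the opening assertion of the lemma. Let $W:=\overline{\operatorname{im} H}$ be the Zariski closure. As the closure of the image of the irreducible variety $\C^n$ under a morphism, $W$ is irreducible; it is a cone; and by the identity $\rk\jac H=\trdeg_{\C}\C(H)=2$ quoted earlier it has dimension $2$. Therefore its projectivisation $C:=\PP(W)\subseteq\PP^{n-1}$ is an irreducible projective curve, and $[x]\mapsto[H_1(x):\cdots:H_n(x)]$ gives a dominant rational map from $\PP^{n-1}$ to $C$ (defined off $V(H)$). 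In this picture a line $L=\C v$ through the origin in the image corresponds to the point $p:=[v]\in C$, and $H^{-1}(L\setminus\{0\})$ is precisely the set of $\theta$ with $[H(\theta)]=p$.

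For part (i) I would reduce the required sandwiching to the existence of a single homogeneous polynomial $\Phi\in\C[y]$ with $V(\Phi)\cap C=\{p\}$ as sets. Granting such a $\Phi$, put $\phi^{(L)}:=\Phi(H_1,\ldots,H_n)$. If $H(\theta)\in L\setminus\{0\}$ then $[H(\theta)]=p\in V(\Phi)$, so $\phi^{(L)}(\theta)=0$, giving $H^{-1}(L\setminus\{0\})\subseteq V(\phi^{(L)})$. Conversely, if $\phi^{(L)}(\theta)=0$ then either $H(\theta)=0$, so $\theta\in V(H)\subseteq H^{-1}(L)$, or $[H(\theta)]\in V(\Phi)\cap C=\{p\}$, so $H(\theta)\in L$ and again $\theta\in H^{-1}(L)$; this yields $V(\phi^{(L)})\subseteq H^{-1}(L)$. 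One then replaces $\phi^{(L)}$ by its square-free part, as is needed later. So everything comes down to producing $\Phi$.

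For part (ii) the projective reformulation makes the statement immediate. A linear subspace $S\subseteq\C^n$ containing infinitely many lines of the image corresponds to a linear subspace $\PP(S)\subseteq\PP^{n-1}$ for which $\PP(S)\cap C$ is infinite. Since $\PP(S)$ is closed and $C$ is an irreducible curve, every closed subset of $C$ is either finite or all of $C$; hence $\PP(S)\cap C=C$, i.e.\ $C\subseteq\PP(S)$. Passing back to cones, each line of $W$ lies in $S$, so $W\subseteq S$, and in particular $\operatorname{im} H\subseteq W\subseteq S$, as claimed.

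The main obstacle is the input to (i): producing a hypersurface $V(\Phi)\subseteq\PP^{n-1}$ meeting $C$ in the single point $p$ and nowhere else. This is the one genuinely geometric step, and it rests on the classical fact that an irreducible projective curve minus a point is affine; equivalently, the reduced point $p$ is the support of an effective ample divisor on $C$, so a high multiple $m[p]$ is very ample and is itself a hyperplane section in the associated embedding, which after clearing that embedding through a Veronese pulls back to the desired hypersurface $\Phi$ on $\PP^{n-1}$. Setting this up rigorously (and verifying that its square-free pullback behaves as required) is the delicate part; by contrast the cone bookkeeping in the preamble, the pull-back verification in (i), and the finiteness argument in (ii) are routine.
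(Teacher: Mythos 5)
Your preamble (the image is a cone whose projectivisation $C$ is an irreducible projective curve, by $\rk \jac H = \trdeg_\C \C(H) = 2$) is fine, and your proof of (ii) is correct and in fact cleaner than the paper's, which deduces (ii) from the polynomials $\phi^{(L)}$ of (i) via a least-common-multiple argument. The fatal problem is the geometric input you need for (i): a homogeneous form $\Phi$ with $V(\Phi) \cap C = \{p\}$ set-theoretically need not exist. The curve $C$ is rational (Lüroth), but it can be singular, and then $\operatorname{Pic}^0(C)$ is a positive-dimensional algebraic group with non-torsion elements. If $p$ is a smooth point of $C$ and $e = \deg C$, a hypersurface of degree $m$ meeting $C$ set-theoretically only at $p$ cuts out the divisor $me\,[p]$, so such a hypersurface exists for some $m$ if and only if the degree-zero class of $e[p] - \mathcal{O}_C(1)$ is torsion in $\operatorname{Pic}^0(C)$. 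Concretely, take $n=3$ and $H = (x_1 x_2^2 - x_1^3,\; x_2^3 - x_1^2 x_2,\; x_1^3)$: this is homogeneous of degree $3$ with $\rk \jac H = 2$ (the lemma does not assume $x+H$ is a quasi-translation), and its image is exactly the cone over a nodal cubic $C \subset \PP^2$, for which $\operatorname{Pic}^0(C) \cong \C^*$. For every smooth point $p$ of $C$ outside a countable set, no plane curve of any degree meets $C$ set-theoretically in $\{p\}$ alone, so your $\Phi$, and hence your $\phi^{(L)}$, does not exist for the corresponding lines $L$. The precise step that breaks is the ``clearing through a Veronese'': very ampleness of $m[p]$ gives an abstract embedding of $C$ by $|m[p]|$, but to convert a hyperplane there into a hypersurface in the original $\PP^{n-1}$ you must identify $\mathcal{O}_C(m[p])$ with the restriction $\mathcal{O}_C(k)$ of some ambient twist, and that identification is exactly the torsion condition, which fails. (Relatedly, affineness of $C \setminus \{p\}$ is implied by, but does not imply, the existence of $\Phi$, so your ``equivalently'' is incorrect.)

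The lemma is nevertheless true because $\phi^{(L)}$ is not required to be a pullback $\Phi(H)$ of anything from the target; it only has to be a polynomial on the source. The paper takes $\phi^{(L)} := \gcd\{\alpha\tp H \mid \alpha\tp c = 0\}$ for a nonzero $c \in L$; in the nodal example above this gcd is a power of the linear form $t_0 x_1 - s_0 x_2$, which for degree reasons lies outside $\C[H_1,H_2,H_3]$, i.e., it is genuinely not of your form. With this definition the inclusion $V(\phi^{(L)}) \subseteq H^{-1}(L)$ is immediate, and the hard inclusion $H^{-1}(L \setminus \{0\}) \subseteq V(\phi^{(L)})$ is proved by choosing $\alpha, \beta$ orthogonal to $c$ with $\gcd\{\alpha\tp H, (\phi^{(L)})^{-1}\beta\tp H\} = 1$ and exploiting an irreducible homogeneous relation $R(H_i, \alpha\tp H, \beta\tp H) = 0$, which exists because $\trdeg_\C \C(H) = 2$, together with the Nullstellensatz. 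Working with divisors on the source $\C^n$ (where every codimension-one subvariety is a hypersurface) is what sidesteps the Picard-group obstruction; any repair of your argument would have to do the same.
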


\begin{proof}
Since $H$ is homogeneous, say of degree $d$, we have $\lambda H(\theta) = 
H(\sqrt[d]{\lambda} \theta)$. Consequently, the image of $H$ consists of a union of 
lines through the origin.
\begin{enumerate}[\upshape (i)] 
 
\item Take any nonzero $c \in L$ and let $W := \{\alpha \in \C^n \mid \alpha\tp c = 0\}$.
Define $\phi^{(L)} := \gcd\{ \alpha\tp H \mid \alpha \in W\}$. If $\phi^{(L)}(\theta) = 0$ 
for some $\theta \in \C^n$, then $\alpha\tp H(\theta) = 0$ for all $\alpha \in W$, which
implies that $H(\theta) \in L$. Hence $V(\phi^{(L)}) \subseteq H^{-1}(L)$ indeed.

So it remains to prove that $H^{-1}(L \setminus \{0\}^n) \subseteq V(\phi^{(L)})$. This is 
trivial if $\phi^{(L)} = 0$, so assume that $\phi^{(L)} \ne 0$. By definition of $\phi^{(L)}$, 
there exists an $\alpha \in W$ such that $\alpha\tp H \ne 0$. Furthermore, 
$\gcd \{(\phi^{(L)})^{-1} \beta\tp H \mid \beta \in W\} = 1$. Hence for every irreducible divisor $f$ of
$\alpha\tp H$, the set $\{ \beta \in W \mid (\phi^{(L)})^{-1} \beta\tp H$ is 
divisible by $f\}$ is a proper linear subspace of $W$. Since $\alpha\tp H$ has only finitely many 
irreducible divisors, we can choose $\beta \in W$ such that $f \nmid (\phi^{(L)})^{-1} \beta\tp H$ 
for every irreducible divisor $f$ of $\alpha\tp H$. In other words, 
$\gcd\{\alpha\tp H, (\phi^{(L)})^{-1} \beta\tp H\} = 1$.

We shall prove that $H^{-1}(L \setminus \{0\}^n) \subseteq V(\phi^{(L)})$ by showing that
for each $i \le n$, $\theta \notin V(\phi^{(L)})$ and $\theta \in H^{-1}(L)$ imply
$\theta \in H^{-1}(\{0\}^n)$. In other words, we show for each $i \le n$ that
$\phi^{(L)}(\theta) \ne 0$ and $H(\theta) \in L$ imply $H_i(\theta) = 0$. 
So let us take any $i \le n$ and suppose that $\phi^{(L)}(\theta) \ne 0$ and $H(\theta) \in L$. 
Then $\alpha\tp H(\theta) = \beta\tp H(\theta) = 0$, so there exists an irreducible divisor $f$ of 
$\beta\tp H$ such that $f(\theta) = 0$, but $f \nmid \phi^{(L)}$ because $\phi^{(L)}(\theta) \ne 0$.
So $f \mid (\phi^{(L)})^{-1} \beta\tp H$. From $\gcd\{\alpha\tp H, (\phi^{(L)})^{-1} \beta\tp H\} = 1$,
it follows that $f \nmid \alpha\tp H$. If there exists a homogeneous polynomial in $H_i$ and 
$\alpha\tp H$ which is divisible by $f$ and monic with respect to $H_i$, then we can deduce that
$H_i$ is contained in the radical of $(\alpha\tp H,f)$, which gives $H_i(\theta) = 0$ because
$\alpha\tp H(\theta) = f(\theta) = 0$.

So it remains to show that a homogeneous polynomial as above exists.
It is known that $\rk \jac H = \trdeg_{\C} \C(H)$, see e.g.\@ Proposition 1.2.9 of either 
\cite{arnobook} of \cite{homokema}.
Since $\trdeg_{\C} \C(H) = \rk \jac H = 2$, there exists a nonzero polynomial 
$R \in \C[y_1,y_2,y_3]$ such that $R(H_i, \alpha\tp H, \beta\tp H) = 0$. Since $H$ is homogeneous, we
can replace $R$ by any of its homogeneous components, so we may assume that $R$ is homogeneous.
Furthermore, we can replace $R$ by (at least) one of its irreducible factors, so we may assume that $R$
is irreducible as well. 

View $R$ as a polynomial in $y_3$ over $\C[y_1,y_2]$ and take for $R_0$ 
the coefficient of $y_3^0$ of $R$. If $R$ is linear, then $R_0 \ne 0$ because 
$R_0(H_i, \alpha\tp H)$ cancels out a scalar multiple of $\beta\tp H \ne 0$ in 
$R(H_i, \alpha\tp H, \beta\tp H) = 0$ if $R_0 \ne R$. 
If $R$ is not linear, then $R_0 \ne 0$ because $R$ is irreducible. So $R_0 \ne 0$
in any case. Since 
$$
f \mid \beta\tp H \mid R(H_i, \alpha\tp H, \beta\tp H) - R_0 (H_i, \alpha\tp H) = -R_0 (H_i, \alpha\tp H)
$$
and $f \nmid \alpha\tp H$, we get the homogeneous polynomial in $H_i$ and $\alpha\tp H$ 
that we need, i.e.\@ divisible by $f$ and monic with respect to $H_i$, if we remove 
factors $\alpha\tp H$ from $R_0 (H_i, \alpha\tp H)$.

\item Suppose that $S$ contains infinitely many lines through the origin in the image of $H$.
Since $S$ is a zero set of linear forms in $y$, $H^{-1}(S)$ is a zero set of
linear forms in $H$. Suppose $\alpha\tp H$ is any of these linear forms. It suffices to
prove that $H^{-1}(S) = \C^n$ and we do that by showing that $\alpha\tp H = 0$.

Choose $\phi^{(L)}$ as in (i) square-free for each line $L \subseteq S$ through the origin 
in the image of $H$. From the Nullstellensatz, it follows that $\phi^{(L)} \mid \alpha\tp H$ 
for each line $L \subseteq S$ through the origin in the image of $H$. Hence the least common multiple $f$
of the polynomials $\phi^{(L)}$, with $L$ as such, exists as a divisor of $\alpha\tp H$. 
Furthermore, $f$ can be written as the least common multiple of finitely many $\phi^{(L)}$ with 
$L$ as above. This contradicts the assumption that there are infinitely many 
lines $L \subseteq S$ through the origin in the image of $H$, so $\alpha\tp H = 0$. \qedhere

\end{enumerate}
\end{proof}

\begin{theorem} \label{qtrk1}
Assume $x + H$ is a quasi-translation over $\C$ such that 
$\rk \jac H = 1$. Then the linear span of the image of $H$ has dimension $n-1$ at most.

More precisely, if $H_1 = H_2 = \cdots = H_s = 0$ and $H_{s+1}, H_{s+2}, \ldots, H_n$
are linearly independent over $\C$, then $s \ge 1$ and $H_i \in \C[x_1,x_2,\ldots,x_s]$ 
for each $i$.
\end{theorem}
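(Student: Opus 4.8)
The plan is to derive this from the homogeneous rank-two case, Theorem \ref{qthrk2}, by homogenizing with Proposition \ref{qthmg}. Since the image of $H$ lies in $\{0\}^s \times \C^{n-s}$ and the constant linear relations among $H_1, \ldots, H_n$ form exactly the $s$-dimensional space cut out by $H_1 = \cdots = H_s = 0$, the linear span of the image has dimension $n - s$; hence the first assertion is equivalent to $s \ge 1$, and it suffices to prove the ``more precisely'' part. Throughout I write $g := \gcd\{H_1, \ldots, H_n\}$ and $K_i := g^{-1} H_i$. Because dividing every component by the same $g$ preserves every linear relation, $K_{s+1}, \ldots, K_n$ are again linearly independent and $K_1 = \cdots = K_s = 0$. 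I split on the number $n - s$ of independent components.

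If $n - s \le 1$, the argument is direct. The subcase $n - s = 0$ gives $H = 0$, contradicting $\rk \jac H = 1$ (this also rules out $n = 1$), so $n - s = 1$ and $H = (0, \ldots, 0, H_n)$ with $H_n \neq 0$. The only possibly nonzero row of $\jac H \cdot H = 0$ is the last, which reads $(\partial_{x_n} H_n) H_n = 0$; as $\C[x]$ is a domain this forces $\partial_{x_n} H_n = 0$, so $H_n \in \C[x_1, \ldots, x_{n-1}] = \C[x_1, \ldots, x_s]$ and $s = n - 1 \ge 1$, as desired.

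Now assume $n - s \ge 2$. Set $d := \deg H$ and form the homogeneous quasi-translation $\tilde{x} + \tilde{H}$ of Proposition \ref{qthmg} in dimension $n + 1$. Homogenization sends the zero components of $H$ to zero and preserves linear independence, so $\tilde{H}$ vanishes exactly in the $s + 1$ positions $1, \ldots, s, n+1$ while $\tilde{H}_{s+1}, \ldots, \tilde{H}_n$ stay linearly independent. By Proposition \ref{qthmg} the rank $\rk \jac_{\tilde{x}} \tilde{H}$ is $1$ or $2$; were it $1$, Theorem \ref{qthrk1} would force $\tilde{H} = \tilde{g}\,\tilde{c}$ with all components proportional, contradicting the presence of $n - s \ge 2$ independent components. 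Hence $\rk \jac_{\tilde{x}}\tilde{H} = 2$. After a coordinate permutation moving $x_{n+1}$ next to $x_1, \ldots, x_s$, Theorem \ref{qthrk2} applies with $\tilde{s} = s + 1$, giving $s + 1 \ge 2$, i.e. $s \ge 1$, and placing each reduced component in $\C[x_1, \ldots, x_s, x_{n+1}]$. A short computation with homogenizations identifies this reduced component with the homogenization of $K_i$ with respect to $x_{n+1}$; dehomogenizing (setting $x_{n+1} = 1$) therefore yields $K_i \in \C[x_1, \ldots, x_s]$ for every $i$.

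It remains to upgrade ``$K_i \in \C[x_1, \ldots, x_s]$'' to ``$H_i \in \C[x_1, \ldots, x_s]$'', i.e. to show $g \in \C[x_1, \ldots, x_s]$, and this is the crux. Since $\rk \jac H = 1$, all gradients $\grad H_i$ are proportional over $\C(x)$; pick $i, j > s$ with $K_i, K_j$ linearly independent (possible as $n - s \ge 2$). For every $k > s$ we have $\partial_{x_k} K_i = \partial_{x_k} K_j = 0$, so $\partial_{x_k} H_i = K_i \,\partial_{x_k} g$ and $\partial_{x_k} H_j = K_j\, \partial_{x_k} g$, while for $m \le s$ we have $\partial_{x_m} H_i = K_i\, \partial_{x_m} g + g\, \partial_{x_m} K_i$. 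Writing out the vanishing $(i,j)$-minor of $\jac H$ in columns $k > s$ and $m \le s$ and simplifying collapses to
$$
g\,(\partial_{x_k} g)\,\bigl(K_i\, \partial_{x_m} K_j - K_j\, \partial_{x_m} K_i\bigr) = 0 .
$$
Because $K_i$ and $K_j$ are linearly independent elements of $\C[x_1, \ldots, x_s]$, the quotient $K_i/K_j$ is nonconstant, so $K_i\, \partial_{x_m} K_j - K_j\, \partial_{x_m} K_i \neq 0$ for some $m \le s$; as $g \neq 0$ this forces $\partial_{x_k} g = 0$ for every $k > s$. Thus $g \in \C[x_1, \ldots, x_s]$ and $H_i = g K_i \in \C[x_1, \ldots, x_s]$ for all $i$. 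I expect the homogenization bookkeeping (tracking the gcd and the zero-pattern through Proposition \ref{qthmg} and back) and especially this final upgrade from the reduced components to the components themselves to be the delicate points, since Theorem \ref{qthrk2} only delivers the reduced version; the remainder is a direct appeal to Theorems \ref{qthrk1} and \ref{qthrk2}.
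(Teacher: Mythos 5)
Your proof is correct, and it shares the paper's overall skeleton --- homogenize via Proposition \ref{qthmg}, use Theorem \ref{qthrk1} to force $\rk \jac_{\tilde{x}} \tilde{H} = 2$, apply Theorem \ref{qthrk2} after a coordinate permutation, and dehomogenize --- but it diverges at exactly the point you call the crux, and there the two arguments are genuinely different. The paper never faces your gcd problem: before homogenizing, it arranges for a component equal to $1$ (normalizing $H_n = 1$ if $1$ lies in the span of $H_{s+1}, \ldots, H_n$, and otherwise passing to the quasi-translation $(x, x_{n+1}) + (H,1)$ in dimension $n+1$), so that the homogenized map acquires a component $x_{\tilde{n}}^d$. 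This forces $\gcd\{\tilde{H}_1, \ldots, \tilde{H}_{\tilde{n}}\} = 1$, so Theorem \ref{qthrk2} applies to the components themselves rather than to their reductions, and it simultaneously guarantees two linearly independent nonzero components, so the case $n - s = 1$ needs no separate treatment. You instead homogenize $H$ as it stands, accept that Theorem \ref{qthrk2} only yields $g^{-1}H_i \in \C[x_1, \ldots, x_s]$, and then recover $g \in \C[x_1, \ldots, x_s]$ by an elementary computation: the vanishing $2 \times 2$ minors of the rank-one Jacobian collapse to $g\,(\partial_{x_k} g)\,(K_i\, \partial_{x_m} K_j - K_j\, \partial_{x_m} K_i) = 0$, and linear independence of $K_i, K_j$ over $\C$ supplies an $m \le s$ making the last factor nonzero; both this computation and your gcd bookkeeping through Proposition \ref{qthmg} check out. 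The trade-off: the paper's append-a-constant trick is uniform and pushes all the work into Theorem \ref{qthrk2}, whereas your route costs a case split ($n - s \le 1$ done by hand, since the minor argument needs two independent reduced components) plus a final computation, but in exchange it makes explicit how the rank-one hypothesis itself pins down the gcd --- a step that stays hidden in the paper's argument.
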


\begin{proof}
Just as in the proof of theorem \ref{qthrk2}, we may assume that $H_1 = H_2 = \cdots = H_s = 0$ 
and $H_{s+1}, H_{s+2}, \ldots, H_n$ are linearly independent over $\C$ for some $s \ge 1$.
If $1$ is linearly dependent over $\C$ of $H_{s+1}, H_{s+2}, \ldots, H_n$, then we may additionally
assume that $H_n = 1$. If $H_n = 1$, then define $\tilde{n} = n + 1$ and let $\tilde{H}$ be the 
homogeneization of $H$ as in proposition \ref{qthmg}. If $H_n = 1$, then define 
$\tilde{n} = n + 2$ and let $\tilde{H}$ be the homogeneization of $(H,1)$ as in proposition 
\ref{qthmg}. Let $\tilde{x} = (x_1, x_2, \ldots, x_{\tilde{n}})$. Since $(x,x_{n+1}) + (H,1)$
is a quasi-translation as well, it follows from proposition \ref{qthmg} that $\tilde{x} + \tilde{H}$
is also a quasi-translation, and that $\rk \jac_{\tilde{x}} \tilde{H} \le \rk \jac H + 1 = 2$.

One can easily verify that $\gcd\{\tilde{H}_1, \tilde{H}_2, \ldots, \tilde{H}_{\tilde{n}}\} = 1$
and that $\tilde{H}_{s+1}, \tilde{H}_{s+2}, \allowbreak \ldots, \tilde{H}_{\tilde{n}-1}$ are 
linearly independent over $\C$. Hence $\rk \jac \tilde{H} = 2$ on account of theorem \ref{qthrk1}.
By interchanging coordinates $s+1$ and $\tilde{n}$, we can deduce from theorem \ref{qthrk2}
that $s \ge 1$ and that $\tilde{H}_i \in \C[x_1,x_2,\ldots,\allowbreak x_s,x_{\tilde{n}}]$. 
Hence $H_i \in \C[x_1,x_2,\ldots,x_s]$ for each $i$.
\end{proof}

\begin{corollary} \label{qtdimsmall}
Assume $x + H$ is a quasi-translation in dimension $n$.
\begin{enumerate}[\upshape (i)]
 
\item If $n \le 3$, then the linear span of the image of $H$ has dimension 
at most $\max\{n-1,1\}$.

\item If $H$ is homogeneous and $n \le 4$, then the linear span 
of the image of $H$ has dimension at most $\max\{n-2,1\}$.

\end{enumerate}
In particular, the dimension of the linear span of the image of $H$ is at most
$2$ if either {\upshape (i)} or {\upshape (ii)} is satisfied.
\end{corollary}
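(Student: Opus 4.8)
The plan is to deduce both parts from the two structural theorems about homogeneous quasi-translations, Theorems \ref{qthrk1} and \ref{qthrk2}: I would prove (ii) by a direct case distinction, and then obtain (i) from (ii) by homogenizing.

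For (ii) the hypothesis already gives $H$ homogeneous, so I split on $\rk \jac H$ using Theorem \ref{qthrk1}. That theorem says either $H = gc$ for some $g \in \C[x]$ and $c \in \C^n$, in which case the image lies on the line $\C c$ and the linear span of the image has dimension at most $1 \le \max\{n-2,1\}$; or else $2 \le \rk \jac H \le n-2$, which forces $n \ge 4$, hence (as $n \le 4$) exactly $n = 4$ and $\rk \jac H = 2$. In this remaining case Theorem \ref{qthrk2} bounds the dimension of the linear span of the image of $H$ by $n - 2 = 2 = \max\{n-2,1\}$. This settles (ii) for every $n \le 4$.

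For (i) I would reduce to the homogeneous situation via Proposition \ref{qthmg} (the case $H = 0$ being trivial). Writing $\tilde x + \tilde H$ for the homogenization in dimension $n+1 \le 4$, the crucial observation is that the linear span of the image of $\tilde H$ has the same dimension as that of $H$. Indeed, for $j \le n$ one has $\tilde H_j = x_{n+1}^d H_j(x_{n+1}^{-1}x)$ while $\tilde H_{n+1} = 0$, so for a covector $(\alpha_1,\ldots,\alpha_{n+1})$ the relation $\sum_{j=1}^{n+1}\alpha_j \tilde H_j = x_{n+1}^d \big(\sum_{j=1}^n \alpha_j H_j\big)(x_{n+1}^{-1}x)$ shows that $\sum_j \alpha_j \tilde H_j = 0$ if and only if $\sum_{j=1}^n \alpha_j H_j = 0$, with $\alpha_{n+1}$ free. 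Thus the space of linear forms vanishing on the image of $\tilde H$ is exactly the space of such forms for $H$ together with the coordinate form $y_{n+1}$, so its dimension is one larger and the two spans have equal dimension. Applying the already-proved part (ii) to the homogeneous quasi-translation $\tilde x + \tilde H$ in dimension $n+1$ then gives a span of dimension at most $\max\{(n+1)-2,1\} = \max\{n-1,1\}$, which is precisely the bound asserted in (i).

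The concluding ``in particular'' is then immediate, since $\max\{n-1,1\}\le 2$ for $n\le 3$ and $\max\{n-2,1\}\le 2$ for $n\le 4$. Everything except the dimension-preservation step is a routine case distinction feeding the inputs of Theorems \ref{qthrk1} and \ref{qthrk2}; I expect the only delicate point to be the bookkeeping with the extra variable $x_{n+1}$ and the coordinate form $y_{n+1}$ that establishes the equality of the two span dimensions.
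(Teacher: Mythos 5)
Your proposal is correct and takes essentially the same route as the paper: part (ii) by splitting on $\rk \jac H$ via theorems \ref{qthrk1} and \ref{qthrk2} (with the span of dimension at most $1$ in the rank $\le 1$ case and at most $n-2$ in the rank $2$ case), and part (i) by applying (ii) to the homogenization from proposition \ref{qthmg}. The only difference is that you explicitly verify that homogenization preserves the dimension of the linear span of the image, a bookkeeping step the paper leaves implicit; your argument for it is valid.
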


\begin{proof}
Notice that (i) follows from (ii) by applying (ii) on the homogeneization of $H$
as in proposition \ref{qthmg}. So assume that $H$ is homogeneous and $n \le 4$.
Then it follows from theorem \ref{qthrk1} that either $\rk \jac H \le 1$ or 
$2 \le \rk \jac H \le n - 2 \le 2$. So $\rk \jac H \le 2$.

Suppose first that $\rk \jac H \le 1$. Then we can deduce from theorem \ref{qthrk1} that
the linear span of the image of $H$ is generated by a vector $c$, so that it has 
dimension at most $1$. Suppose next that $\rk \jac H = 2$. Then we can deduce from 
theorem \ref{qthrk2} that the linear span of the image of $H$ has dimension at most $n - 2$.
\end{proof}

Notice that example \ref{exampel} (in dimension 4) and its homogeneization (in dimension 5)
show that the bounds on $n$ in (i) and (ii) of the above corollary are sharp.
Example \ref{example} shows that for homogeneous quasi-translations, the linear span of the 
image of $H$ may have dimension $n$.

Corollary \ref{qtdimsmall} tells what the dimension of the linear span of $H$ is, but not
how $H$ looks. Since the dimension of the linear span of $H$ is at most $2$, we may
assume that $H_1 = H_2 = \cdots = H_{n-2} = 0$ on account of proposition \ref{qtconj}, 
so that we can apply the following theorem.

\begin{theorem} \label{qtform}
Assume $x + H$ is a quasi-translation, such that $H_1 = H_2 = \cdots = H_{n-2} = 0$.
Then $H$ is of the form 
$$
H = \big(0^1,0^2,\ldots,0^{n-2},b\,g,a\,g\big)
$$
where $g \in \C[x_1,x_2,\ldots,x_{n-2},a\,x_{n-1} - b\,x_n]$ 
and $a, b \in \C[x_1,x_2,\ldots,x_{n-2}]$.
\end{theorem}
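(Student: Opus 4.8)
The plan is to read off the quasi\-translation condition as two partial differential equations for the only possibly nonzero components $P := H_{n-1}$ and $Q := H_n$, and then to produce the factorization by a $\gcd$ argument. Writing out $\jac H \cdot H = (0^1,\ldots,0^n)$ from (3) of proposition \ref{qtprop} and using $H_1 = \cdots = H_{n-2} = 0$, the rows indexed by $i \le n-2$ vanish automatically, and the last two rows give
\begin{equation*}
P\,\parder{P}{x_{n-1}} + Q\,\parder{P}{x_n} = 0 \qquad \text{and} \qquad P\,\parder{Q}{x_{n-1}} + Q\,\parder{Q}{x_n} = 0 .
\end{equation*}
If $H = 0$ I take $g = 0$, so assume $(P,Q) \ne (0,0)$ and set $g := \gcd\{P,Q\} \in \C[x]$, together with $\hat P := g^{-1}P$ and $\hat Q := g^{-1}Q$, so that $\gcd\{\hat P,\hat Q\} = 1$. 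Since $x + H = x + g\,(0,\ldots,0,\hat P,\hat Q)$ is a quasi\-translation, proposition \ref{irred} shows that $x + (0,\ldots,0,\hat P,\hat Q)$ is one as well, and hence $\hat P,\hat Q$ satisfy the same two displayed equations.

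The first key step is to prove that $\hat P,\hat Q$ lie in $R := \C[x_1,\ldots,x_{n-2}]$, i.e.\ do not depend on $x_{n-1},x_n$, by an elementary coprimality\-and\-degree argument that sidesteps the heavier algebraic geometry used by Gordan and N{\"o}ther. The first equation for $\hat P$ reads $(\partial \hat P/\partial x_n)\,\hat Q = -(\partial \hat P/\partial x_{n-1})\,\hat P$, so $\hat P \mid (\partial \hat P/\partial x_n)\,\hat Q$; as $\gcd\{\hat P,\hat Q\}=1$ this forces $\hat P \mid \partial \hat P/\partial x_n$. Regarding everything as a polynomial in $x_{n-1},x_n$ over the domain $R$, the derivative $\partial \hat P/\partial x_n$ has strictly smaller degree in $(x_{n-1},x_n)$ than $\hat P$ unless it is zero, so divisibility forces $\partial \hat P/\partial x_n = 0$, and then the equation yields $\partial \hat P/\partial x_{n-1}=0$ too; the symmetric manipulation of the second equation (dividing by $\hat Q$) gives $\partial \hat Q/\partial x_{n-1}=\partial \hat Q/\partial x_n=0$ (the degenerate cases $\hat P=0$ or $\hat Q=0$ are immediate, since coprimality then makes the other component a constant). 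Thus $b := \hat P$ and $a := \hat Q$ lie in $R$ and are coprime, and $H_{n-1} = bg$, $H_n = ag$ as required.

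It remains to locate $g$. Substituting $P = bg$, $Q = ag$ into either differential equation and cancelling the common factor leaves the single linear equation $b\,(\partial g/\partial x_{n-1}) + a\,(\partial g/\partial x_n) = 0$. Working over $K := \C(x_1,\ldots,x_{n-2})$ and completing $\ell_0 := a\,x_{n-1} - b\,x_n$ to a linear coordinate system in the $(x_{n-1},x_n)$\-plane (possible since $(a,b)\ne 0$), this equation says that $g$ is constant in the complementary direction, whence $g = \sum_k c_k \ell_0^{\,k}$ with coefficients $c_k \in K$. The main obstacle will be upgrading these rational coefficients to genuine elements of $R$: because $\gcd\{a,b\}=1$ but B{\'e}zout's identity fails in more than one variable, I cannot simply invert $a$ or $b$. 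Instead I clear a common denominator $\delta \in R$, and if $\delta$ is a nonunit I reduce the identity $\delta g = \sum_k (\delta c_k)\,\ell_0^{\,k}$ modulo an irreducible factor $\pi$ of $\delta$. Coprimality of $a,b$ guarantees that $\ell_0 \bmod \pi$ is a nonzero linear form over the domain $R/\pi$, hence transcendental over its fraction field, so its powers are linearly independent; comparing coefficients then forces $\pi \mid \delta c_k$ for every $k$, contradicting the minimality of $\delta$. Therefore each $c_k \in R$, giving $g \in \C[x_1,\ldots,x_{n-2},\,a\,x_{n-1}-b\,x_n]$ and completing the proof.
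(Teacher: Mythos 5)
Your proof is correct, and for the crucial first step it takes a genuinely different and considerably more elementary route than the paper. The paper proves $a,b \in \C[x_1,\ldots,x_{n-2}]$ by splitting into the cases $\rk \jac H \le 1$ and $\rk \jac H = 2$, invoking theorem \ref{qtrk1} in the first case and homogenization (proposition \ref{qthmg}) together with theorem \ref{qthrk2} in the second; this reuses the full Gordan--N{\"o}ther apparatus, including the algebraic geometry of lemma \ref{phiL}. You reach the same conclusion in all cases at once from the two scalar equations contained in $\jac H \cdot H = 0$: after dividing out $g = \gcd\{H_{n-1},H_n\}$ via proposition \ref{irred} (exactly as the paper does), coprimality turns $\hat P\,\partial \hat P/\partial x_{n-1} + \hat Q\,\partial \hat P/\partial x_n = 0$ into $\hat P \mid \partial \hat P/\partial x_n$, and since differentiation strictly lowers the degree in $(x_{n-1},x_n)$ while multiplication by $\hat P$ cannot (degrees in $(x_{n-1},x_n)$ add on products because $\C[x_1,\ldots,x_{n-2}]$ is a domain), both partials of $\hat P$, and symmetrically of $\hat Q$, must vanish. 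This works precisely because the vanishing $H_1 = \cdots = H_{n-2} = 0$ is a \emph{hypothesis} of theorem \ref{qtform}, whereas producing such vanishing is the hard content of theorems \ref{qtrk1} and \ref{qthrk2}; your argument shows the theorem itself never needed that machinery, at the price of not simply being a corollary of results already proved. The second half of your proof essentially matches the paper's: the same equation $b\,\partial g/\partial x_{n-1} + a\,\partial g/\partial x_n = 0$ (you get it by direct substitution, the paper from the invariance of $g$ and \eqref{fxtHeqv}), the same conclusion $g \in \C(x_1,\ldots,x_{n-2})[a\,x_{n-1} - b\,x_n]$, and then a descent of coefficients, where the paper cites Gauss' lemma and you prove the needed instance by clearing a minimal denominator and reducing modulo an irreducible factor $\pi$ --- legitimate, since $\C[x_1,\ldots,x_{n-2}]/(\pi)$ is a domain and the image of $a\,x_{n-1} - b\,x_n$ remains a nonzero form by coprimality of $a$ and $b$. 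Only one small point deserves a touch-up: when cancelling to obtain the equation for $g$, you must use the first equation if $b \ne 0$ and the second if $a \ne 0$ (not both can vanish, as $\gcd\{a,b\} = 1$); this is cosmetic, not a gap.
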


\begin{proof}
Let $g = \gcd\{H_{n-1},H_n\}$ and take  $b = g^{-1} H_{n-1}$ and 
$a = g^{-1} H_{n-1}$. We first show that $a, b \in \C[x_1,x_2,\ldots,x_{n-2}]$. 
We distinguish two cases.
\begin{itemize}

\item \emph{$\rk \jac H \le 1$.} \\
If $H_{n-1}$ and $H_n$ are linearly independent over $\C$, then we can deduce from theorem 
\ref{qtrk1} that $H_{n-1}, H_n \in \C[x_1,x_2,\ldots,x_{n-2}]$, so that $a, b \in 
\C[x_1,x_2,\allowbreak\ldots,\allowbreak x_{n-2}]$ as well. 
If $H_{n-1}$ and $H_n$ are linearly dependent over $\C$, but not both equal to zero,
then $a, b \in \C$ because $\gcd\{a,b\} = 1$ and $a H_{n-1} - b H_n = 0$ is a linear dependence of  
$H_{n-1}$ and $H_n$ over $\C[x_1,x_2,\ldots,x_{n-2}]$. 

\item \emph{$\rk \jac H = 2$.} \\
Let $g = \gcd\{H_{n-1},H_n\}$. From proposition \ref{irred}, it follows that $x + g^{-1}H$
is a quasi-translation as well. Furthermore, $g$ is an invariant of $x + g^{-1}H$.
Let $\tilde{H}$ be the homogeneization of $g^{-1}H$. Just like in the last paragraph 
of the proof of theorem \ref{qtrk1}, we can deduce that $b = g^{-1} H_{n-1}$ and 
$a = g^{-1} H_{n-1}$ are contained in $\C[x_1,x_2,\ldots,x_{n-2}]$. 

\end{itemize}
Since $g$ is an invariant of $x + g^{-1}H$, it follows from \eqref{fxtHeqv} in proposition
\ref{qtprop} that 
\begin{equation} \label{geq}
b \parder{}{x_{n-1}} g + a \parder{}{x_{n}} g = 0
\end{equation}
If we express $g$ as a polynomial in 
$\C(x_1,x_2,\ldots,x_{n-2})[a\,x_{n-1} - b\,x_n,x_i]$ for some $i \in \{n-1,n\}$, 
then we can deduce from equation \eqref{geq} that 
$g \in \C(x_1,x_2,\ldots,\allowbreak x_{n-2})[a\,x_{n-1} - b\,x_n]$. 

Now let $g^{(k)}$ be the part of degree $k$ with respect to $(x_{n-1},x_n)$ of 
$g$. Then $g^{(k)}$ is a polynomial which is the product of an element $c_k$ of 
$\C(x_1,x_2,\ldots,x_{n-2})$ and $(a\,x_{n-1} - b\,x_n)^k$. 
Since $\gcd\{a,b\} = 1$, we can deduce from Gauss' lemma that 
$c_k \in \C[x_1,x_2,\ldots,x_{n-2}]$ for all $k$. 
So $g \in \C[x_1,x_2,\ldots,x_{n-2},a\,x_{n-1} - b\,x_n]$. 
\end{proof}

The following results follow immediately from corollary \ref{qtdimsmall} and theorems
\ref{qtrk1} and \ref{qtform}.

\begin{corollary}[Z. Wang] \label{qt3}
Assume $x + H$ is a quasi-translation in dimension $n \le 3$ over $\C$. Then there 
exists an invertible matrix $T$ over $\C$ such that $T^{-1}H(Tx)$ is of the form
\begin{enumerate}[\upshape (1)]

\item $(0,g)$, where $g \in \C[x_1]$, if $n = 2$,

\item $(0,b\,g,a\,g)$, where $g \in \C[x_1,a\,x_2 - b\,x_3]$ and $a,b \in \C[x_1]$,
      if $n = 3$.

\end{enumerate}
\end{corollary}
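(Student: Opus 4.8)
The plan is to derive Corollary~\ref{qt3} directly from the already-established structural results, treating $n=2$ and $n=3$ separately. The starting observation is that Corollary~\ref{qtdimsmall} guarantees that in dimension $n \le 3$ the linear span of the image of $H$ has dimension at most $\max\{n-1,1\}$, hence at most $2$ in both cases. This lets me invoke Proposition~\ref{qtconj}, which says that replacing $H$ by $T^{-1}H(Tx)$ for an invertible matrix $T$ again yields a quasi-translation: I would choose $T$ so that the image of $H$ lands in the span of the last $\max\{n-1,1\}$ coordinate vectors, i.e.\ so that $H_1 = H_2 = \cdots = H_{n-2} = 0$ after the change of coordinates.

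For the case $n = 2$, after the coordinate change we have $H_1 = 0$, so $H = (0,g)$ with $g := H_2$. The remaining claim is that $g \in \C[x_1]$. I would extract this from Theorem~\ref{qtrk1}: since the span of the image has dimension at most $1 = n-1$, the rank of $\jac H$ is at most $1$, and Theorem~\ref{qtrk1} then forces $H_2 \in \C[x_1,\ldots,x_s]$ with $s = 1$, i.e.\ $g \in \C[x_1]$. (If $H = 0$ the statement is trivially of the required form.)

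For the case $n = 3$, the coordinate change gives $H_1 = 0$, so $H = (0, H_2, H_3)$ with the image spanning a space of dimension at most $2 = n-1$. This is exactly the hypothesis $H_1 = H_2 = \cdots = H_{n-2} = 0$ of Theorem~\ref{qtform} with $n = 3$ (here $n - 2 = 1$). Applying Theorem~\ref{qtform} directly yields that $H = (0, b\,g, a\,g)$ with $a, b \in \C[x_1]$ and $g \in \C[x_1, a\,x_2 - b\,x_3]$, which is precisely form~(2). So the work is essentially just matching the hypotheses of the earlier theorems to the present situation after an appropriate linear conjugation.

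I do not expect a serious obstacle here, since all the analytic content already resides in Corollary~\ref{qtdimsmall}, Theorem~\ref{qtrk1}, and Theorem~\ref{qtform}; the corollary is a packaging statement. The one point requiring a little care is the bookkeeping around the conjugating matrix $T$: I must confirm that Proposition~\ref{qtconj} genuinely produces an invertible $T$ realizing the normal form $H_1 = \cdots = H_{n-2} = 0$ (which follows from choosing $T$ to send a basis of the image span to the appropriate trailing standard basis vectors), and that the invariants $\C[x_1]$ versus $\C[x_1, a\,x_2 - b\,x_3]$ are stated in the conjugated coordinates. Beyond that the argument is a direct specialization.
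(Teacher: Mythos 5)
Your proposal is correct and follows essentially the same route as the paper, whose proof of Corollary~\ref{qt3} is precisely that it ``follows immediately'' from Corollary~\ref{qtdimsmall} and Theorems~\ref{qtrk1} and~\ref{qtform}, after the normalization $H_1 = \cdots = H_{n-2} = 0$ via Proposition~\ref{qtconj} that the paper sets up just before Theorem~\ref{qtform}. Your write-up merely makes the bookkeeping explicit (including the harmless case $\rk \jac H = 0$, where $g$ is constant and hence trivially in $\C[x_1]$), which is exactly the content the paper leaves to the reader.
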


\begin{corollary} \label{qth4}
Assume $x + H$ is a homogeneous quasi-translation in dimension $n \le 4$ over $\C$. Then there 
exists an invertible matrix $T$ over $\C$ such that $T^{-1}H(Tx)$ is of the form
\begin{enumerate}[\upshape (1)]

\item $(0,0,g)$, where $g \in \C[x_1,x_2]$, if $n = 3$,

\item $(0,0,b\,g,a\,g)$, where $g \in \C[x_1,x_2,a\,x_3 - b\,x_4]$ and $a,b \in \C[x_1,x_2]$,
      if $n = 4$.
 
\end{enumerate}
\end{corollary}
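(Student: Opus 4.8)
The plan is to deduce both cases from the dimension bound in corollary \ref{qtdimsmall}, a single linear change of coordinates supplied by proposition \ref{qtconj}, and then the structural theorems \ref{qtrk1} and \ref{qtform}. Since $x+H$ is homogeneous and $n \le 4$, corollary \ref{qtdimsmall}(ii) bounds the dimension of the linear span $V$ of the image of $H$: we get $\dim V \le \max\{n-2,1\}$, i.e.\ $\dim V \le 1$ when $n=3$ and $\dim V \le 2$ when $n=4$. The map $x + T^{-1}H(Tx)$ is again a homogeneous quasi-translation by proposition \ref{qtconj}, and its image span is $T^{-1}V$, of the same dimension. I would therefore choose an invertible $T$ carrying $V$ into the span of the last $\dim V$ standard basis vectors and replace $H$ by $T^{-1}H(Tx)$; in both cases this lets me assume $H_1 = H_2 = 0$ (using $\dim V \le 1$ for $n=3$ and $\dim V \le 2$ for $n=4$). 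It is worth recording that this conjugation preserves homogeneity, the quasi-translation property, and $\rk \jac H$.

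For $n=3$ the reduced map has the form $H = (0,0,g)$ with $g$ homogeneous. If $g=0$ the assertion is trivial, so assume $g \ne 0$. Then $H = g\,e_3$ is of the form $gc$, so $\rk \jac H = 1$ by theorem \ref{qthrk1}, and theorem \ref{qtrk1} applies with $s = 2$: the single component $H_3 = g$ is linearly independent over $\C$ precisely because $g \ne 0$. The conclusion $H_i \in \C[x_1,x_2]$ gives $g \in \C[x_1,x_2]$, which is form (1). (Alternatively, the quasi-translation identity $\jac H \cdot H = 0$ directly yields $g\,\partial_{x_3} g = 0$, hence $\partial_{x_3} g = 0$.)

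For $n=4$ the reduced map satisfies $H_1 = H_2 = 0$, i.e.\ $H_1 = \cdots = H_{n-2} = 0$ with $n=4$, so theorem \ref{qtform} applies verbatim and produces $H = (0,0,b\,g,a\,g)$ with $a,b \in \C[x_1,x_2]$ and $g \in \C[x_1,x_2,a\,x_3 - b\,x_4]$, which is exactly form (2). Undoing the conjugation replaces $H$ by $T^{-1}H(Tx)$ with $T$ invertible, giving the matrix in the statement.

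I do not expect a genuine obstacle here, since the substance is carried by the cited theorems; the statement really does follow ``immediately.'' The only points requiring care are bookkeeping ones: checking that homogeneity, the quasi-translation property, and the image-span dimension are all preserved under the linear conjugation so that the hypotheses of corollary \ref{qtdimsmall}, theorem \ref{qtrk1} and theorem \ref{qtform} remain valid after the change of coordinates; and handling the degenerate possibilities $H=0$ and $\dim V < \max\{n-2,1\}$, where $V$ lies inside the intended coordinate subspace without filling it, so that the needed zero components are still available.
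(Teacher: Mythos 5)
Your proposal is correct and follows essentially the same route as the paper, whose proof of corollary \ref{qth4} is exactly the chain you spell out: the dimension bound of corollary \ref{qtdimsmall}~(ii), the linear conjugation via proposition \ref{qtconj} to arrange $H_1 = H_2 = 0$, and then theorem \ref{qtrk1} (for $n=3$) and theorem \ref{qtform} (for $n=4$). The only slip is claiming $\rk \jac H = 1$ from $H = g\,e_3$ via theorem \ref{qthrk1} (which only gives $\rk \jac H \le 1$, with equality failing when $g$ is constant), but your parenthetical direct argument $g\,\parder{}{x_3} g = 0$ covers all cases, so this is harmless.
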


Theorem \ref{qthrk2} was proved first by Gordan and N{\"o}ther in \cite{gornoet}.
Corollary \ref{qt3} was proved first by Wang in \cite{wang}.

\begin{remark}
If $x + H$ is a homogeneous quasi-translation such that $\rk \jac H = 2$, then
$x + gH$ does not need to be a quasi-translation. For homogeneous maps $H$ with 
$\rk \jac H = 2$, $x + g^{-1}H$ is a quasi-translation for some polynomial $g$, if and 
only if $\jac H \cdot H = \tr \jac H \cdot H$, i.e.\@ at most one of the eigenvalues of 
$\jac H$ is nonzero, see \cite[Th.\@ 4.5.2]{homokema}. If $\jac H \cdot H = 0$, then
it follows from \eqref{qtnilp} in proposition \ref{qtprop} that $\jac H$ is nilpotent, 
i.e.\@ all eigenvalues of $\jac H$ are zero.
\end{remark}

\section{Results and questions}

Gordan and N{\"o}ther classified all homogeneous polynomials with singular Hessians
in dimension five as follows.

\begin{theorem}[Gordan and N{\"o}ther] \label{gndim5}
Assume $h \in \C[x]$ is a homogeneous polynomial in dimension $n = 5$. 
If $\det \hess h = 0$ and $h$ does not satisfy Hesse's theorem, then there 
exists an invertible matrix $T$ over $\C$ such that $h(Tx)$ is of the form
$$
h(Tx) = f\big(x_1,x_2,a_1 x_3 + a_2 x_4 + a_3 x_5\big)
$$
where $f \in \C[x_1,x_2,x_3]$ and $a_1,a_2,a_3 \in \C[x_1,x_2]$.
\end{theorem}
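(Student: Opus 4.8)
The plan is to pass to the quasi-translation attached to $h$, reduce it to the rank-two normal form supplied by theorem \ref{qthrk2}, and then collapse the two remaining free directions to a single linear form.

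First I would set up the quasi-translation. Since $\det\hess h=0$, choose $R\ne 0$ of minimum degree with $R(\grad h)=0$ as in \eqref{gradrel} and put $H:=(\grad_y R)(\grad h)$ as in \eqref{qthess}. Theorem \ref{regtrans} makes $R$ and $H$ homogeneous, and proposition \ref{qthesscor} makes $x+H$ a quasi-translation. As $\hess h$ is symmetric, the row dependence $H\tp\cdot\hess h=0$ is also the column dependence $\hess h\cdot H=0$; reading this row by row gives $\jac(\partial h/\partial x_i)\cdot H=0$ for every $i$, so by \eqref{fxtHeqv} each partial derivative $\partial h/\partial x_i$ is an invariant of $x+H$. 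The hypothesis that $h$ violates Hesse's theorem says exactly that no nonzero constant $c$ satisfies $\jac h\cdot c=0$; since $h$ is homogeneous this is equivalent to $H$ not being a constant vector. By theorem \ref{regtrans} the linear span $U$ of the image of $H$ then has $\dim U\ge 2$, and by theorem \ref{regtranshmg} (here I use that $R$ is homogeneous) $\dim U\ne 2$, so $\dim U\ge 3$; theorem \ref{qthrk1} gives $\rk\jac H\le n-2=3$. Thus $\rk\jac H\in\{2,3\}$ and $\dim U\ge 3$.

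The substance lies in the case $\rk\jac H=2$, to which I claim the problem reduces. Here theorem \ref{qthrk2} applies with $s=2$ (forced by $\dim U\ge 3$ together with $\dim U\le n-2=3$): after replacing $H$ by $T^{-1}H(Tx)$ for a suitable $T$ (proposition \ref{qtconj}, which simultaneously replaces $h$ by $h(Tx)$ and preserves all hypotheses), I may assume $H_1=H_2=0$ with $H_3,H_4,H_5$ linearly independent and $p_i:=g^{-1}H_i\in\C[x_1,x_2]$, where $g=\gcd\{H_1,\dots,H_n\}$. The minimality of $R$ now upgrades this: since $H_i=(\partial R/\partial y_i)(\grad h)$ and $\deg(\partial R/\partial y_i)<\deg R$, the vanishing $H_1=H_2=0$ forces $\partial R/\partial y_1=\partial R/\partial y_2=0$, so $R\in\C[y_3,y_4,y_5]$; moreover $R$ is irreducible (a nontrivial factorization would contradict minimality) and $\deg R\ge 2$ (a linear $R$ would be a constant-coefficient relation among the $\partial h/\partial x_i$, i.e.\ Hesse's theorem). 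Applying Euler's identity to $R$ at $y=\grad h$ and using $R_{y_i}(\grad h)=g\,p_i$ gives $g\sum_{i=3}^{5}p_i\,\partial h/\partial x_i=0$, hence $Dh=0$ for the derivation $D:=\sum_{i=3}^{5}p_i\,\partial/\partial x_i$; so $h$ itself, and not merely its partials, is a $D$-invariant. To finish I would collapse two directions to one: consider the rational map $\Psi\colon\C^n\dashrightarrow\PP^2$, $x\mapsto(\partial h/\partial x_3:\partial h/\partial x_4:\partial h/\partial x_5)$, whose image lies on the irreducible plane curve $C=\{R=0\}$ and is therefore at most a curve. Fixing generic $(x_1,x_2)$ and varying $(x_3,x_4,x_5)$, the relation $Dh=0$ keeps the vector $(\partial h/\partial x_3,\partial h/\partial x_4,\partial h/\partial x_5)$ in the fixed line $\PP\big((p_3,p_4,p_5)^{\perp}\big)$, which meets $C$ in finitely many points because $\deg R\ge 2$ and $C$ is irreducible; hence $\Psi$ is constant along this fibre, so the three last partials have constant ratios $a_1:a_2:a_3$ with $a_i\in\C[x_1,x_2]$. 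Consequently $h$ depends on $(x_3,x_4,x_5)$ only through $\ell=a_1x_3+a_2x_4+a_3x_5$, giving $h=f(x_1,x_2,\ell)$ with $f\in\C[x_1,x_2,x_3]$.

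The main obstacle is the reduction claimed at the start of the third paragraph: ruling out $\rk\jac H=3$ (equivalently $\dim U\ge 4$). The entire rank-two toolkit---theorem \ref{qthrk2} and lemma \ref{phiL}, which rest on the image of $H$ being a union of \emph{lines}---is unavailable when $\rk\jac H=3$, where the image is a two-dimensional cone. Excluding this case is exactly the deep Gordan--N{\"o}ther input: in the language of the polar image $Z=\overline{\grad h(\C^n)}$, one must show its Gauss map has one-dimensional image (so that $Z$ is strongly dual defective), which is the projective-geometric heart of their dimension-five analysis and lies beyond the lemmas proved here. Granting it, the reduction and the collapse above yield the stated normal form.
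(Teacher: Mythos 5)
Your proposal contains a genuine gap, and you name it yourself at the end: nothing in your argument excludes the case $\rk \jac H = 3$. This is not a removable technicality---it is the mathematical heart of the theorem. Your setup only yields $2 \le \rk \jac H \le 3$ (theorem \ref{qthrk1}) together with $\dim U \ge 3$, and the entire rank-two toolkit (theorem \ref{qthrk2} and lemma \ref{phiL}) genuinely rests on the image of $H$ being a union of lines through the origin, which is exactly what fails when $\rk \jac H = 3$. The statement you would need---that $\rk \jac H \le 2$ holds automatically whenever $H = (\grad_y R)(\grad h)$ for a homogeneous $h$ in dimension five and any $R$ with $R(\grad h) = 0$---is precisely what the paper, in the discussion following the theorem, describes as ``what Gordan and N{\"o}ther actually did to get theorem \ref{gndim5}'', deferring its proof to \cite[Th.\@ 4.1]{hmgqt5dim} (see also \cite{gornoet}, \cite{gnlossen}). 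So what you have actually proved is a strictly weaker statement: the normal form under the additional hypothesis $\rk \jac H = 2$. A small further inaccuracy: your parenthetical ``equivalently $\dim U \ge 4$'' is wrong, since the only general inequality is $\rk \jac H \le \dim U$, so rank three is a priori compatible with $\dim U = 3$; the case to be excluded is rank three, full stop.

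For calibration: the paper itself does not prove theorem \ref{gndim5} at all; it states it as a known result with references to \cite{franch}, \cite[\S 4]{garrep} and \cite[Th.\@ 4.1]{hmgqt5dim}, so there is no in-paper proof to measure your argument against---only the remark pinpointing the same missing rank bound you identified. Within the rank-two case, your reduction is sound in outline and close to the known Gordan--N{\"o}ther strategy: minimal $R$, homogeneity via theorem \ref{regtrans}, $\dim U \ge 3$ via theorem \ref{regtranshmg}, normalization $s = 2$ via theorem \ref{qthrk2} and proposition \ref{qtconj}, the deduction $R \in \C[y_3,y_4,y_5]$ from minimality, and the collapse of the last three partial derivatives onto a single point of the irreducible curve $\{R = 0\}$ along each fibre. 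The final steps (that the $a_i$ may be taken in $\C[x_1,x_2]$ and that $f$ is a polynomial rather than a rational expression in $x_1,x_2$) require Gauss-lemma bookkeeping of the kind carried out in the proof of theorem \ref{qtform}, but those are fixable details; the missing rank bound is not.
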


This result can also be found in \cite{franch}, \cite[\S 4]{garrep}, and as 
\cite[Th.\@ 4.1]{hmgqt5dim}. Theorem \ref{gndim5} has a non-homogeneous variant, 
namely \cite[Th.\@ 3.3]{singhess}, which is as follows.

\begin{theorem} \label{gndim3}
Assume $h \in \C[x]$ is a polynomial in dimension $n = 3$. 
If $\det \hess h = 0$ and $h$ does not satisfy Hesse's theorem, then there 
exists an invertible matrix $T$ over $\C$ such that $h(Tx)$ is of the form
$$
h(Tx) = a_1 + a_2 x_2 + a_3 x_3
$$
where $a_1,a_2,a_3 \in \C[x_1]$.
\end{theorem}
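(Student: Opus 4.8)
The plan is to reduce Theorem~\ref{gndim3} to the structural classification of quasi-translations in dimension three, namely corollary~\ref{qt3}, by exploiting the connection between singular Hessians and quasi-translations developed in section~2. Starting from a polynomial $h \in \C[x]$ in dimension $n = 3$ with $\det \hess h = 0$, I would first invoke \eqref{gradrel} to produce a nonzero $R \in \C[y]$ of minimum degree with $R(\grad h) = 0$, and set $H := (\grad_y R)(\grad h)$. By the argument preceding proposition~\ref{qthesscor}, $x + H$ is a quasi-translation (the Hessian is symmetric, so the row dependence $H$ is also a column dependence, giving $\hess h \cdot H = 0$). Since $R$ has minimum degree, $H$ is nonzero. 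The hypothesis that $h$ does \emph{not} satisfy Hesse's theorem should translate, via theorem~\ref{regtrans}, into the statement that the linear span of the image of $H$ has dimension at least two; indeed, if that span had dimension at most one, theorem~\ref{regtrans} would force $\deg R = 1$, which is exactly the linear relation $\jac h \cdot c = 0$ of Hesse's theorem.

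Next I would apply corollary~\ref{qt3}(2): after a linear change of coordinates $T$, the quasi-translation takes the form $T^{-1}H(Tx) = (0, b\,g, a\,g)$ with $g \in \C[x_1, a\,x_2 - b\,x_3]$ and $a, b \in \C[x_1]$. The idea is then to recover the form of $h$ from the form of $H$. Writing $\tilde{h} := h(Tx)$, the equation $\hess \tilde{h} \cdot \tilde{H} = 0$ says that $\grad \tilde{h}$ is annihilated by the derivation with coefficients $\tilde{H} = (0, b\,g, a\,g)$, i.e.\@ $b\,g\,\partial \tilde{h}/\partial x_2 + a\,g\,\partial\tilde{h}/\partial x_3$ is constant in the appropriate sense. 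Dividing out $g$ (using proposition~\ref{irred}, so the invariants are unchanged) reduces this to $b\,\partial\tilde{h}/\partial x_2 + a\,\partial\tilde{h}/\partial x_3 = c_0$ for some constant; the non-Hesse hypothesis should pin down what happens here, and one expects after a further normalization that $\tilde{h}$ depends on $x_2, x_3$ only through a single combination determined by $a$ and $b$.

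The main obstacle I anticipate is bookkeeping the passage from the annihilation condition on $\grad \tilde{h}$ to the explicit form $h(Tx) = a_1 + a_2 x_2 + a_3 x_3$ with $a_1, a_2, a_3 \in \C[x_1]$. Concretely, the first two partials $\partial\tilde{h}/\partial x_2$ and $\partial\tilde{h}/\partial x_3$ must be forced to lie in $\C[x_1]$ (these will become $a_2$ and $a_3$), which amounts to showing $\tilde{h}$ is affine-linear in $x_2$ and $x_3$; the remaining $x_1$-dependence is then collected into $a_1$. This step is where the precise shape of $a$, $b$, and $g$ from corollary~\ref{qt3} must be used, together with the integrability constraint that $\grad\tilde{h}$ is an actual gradient (so its Jacobian, the Hessian, is symmetric). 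I would treat the degenerate sub-cases ($g$ constant, $a$ or $b$ vanishing, $\partial\tilde{h}/\partial x_3 = 0$) separately, as in the proof of theorem~\ref{regtranshmg}, to confirm that each either lands in the stated form or else collapses back to Hesse's theorem, which is excluded by hypothesis.

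One caveat is that corollary~\ref{qt3} is stated for quasi-translations arising abstractly, whereas here $H$ comes from a Hessian; I would verify that no homogeneity is secretly required, since $h$ is \emph{not} assumed homogeneous in dimension three. This is consistent with corollary~\ref{qtdimsmall}(i), whose dimension-three case needs no homogeneity, so the reduction is legitimate and the argument should close.
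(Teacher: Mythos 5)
A preliminary remark: the paper does not prove theorem \ref{gndim3} at all; it only quotes it from \cite[Th.\@ 3.3]{singhess}, so your proposal must be judged on its own, against the toolkit the paper provides. Your route --- minimal-degree $R$, the quasi-translation $x + H$ with $H = (\grad_y R)(\grad h)$ via proposition \ref{qthesscor}, then corollary \ref{qt3} --- is the natural one, and your closing caveat is resolved correctly (corollary \ref{qt3} needs no homogeneity). The genuine gap is in your first reduction. If the span of the image of $H$ has dimension at most one, theorem \ref{regtrans} gives $\deg R = 1$, i.e.\@ $R = c_0 + c\tp y$ with $c \ne 0$; but $R(\grad h) = 0$ then says $\jac h \cdot c = -c_0$, and for non-homogeneous $h$ the constant $c_0$ need not vanish, so this is \emph{not} ``exactly the linear relation $\jac h \cdot c = 0$ of Hesse's theorem.'' This is precisely the pitfall the paper flags before theorem \ref{hesspos}. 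The gap is fatal for the statement as literally written: take $h = x_1 + x_2^3 + x_3^5$. Then $\det \hess h = 0$ (take $R = y_1 - 1$, so $H = (1,0,0)$ and the span has dimension one), and no nonzero $c$ satisfies $\jac h \cdot c = c_1 + 3 c_2 x_2^2 + 5 c_3 x_3^4 = 0$, so $h$ satisfies the hypotheses of theorem \ref{gndim3}; yet no invertible $T$ puts $h(Tx)$ in the form $a_1 + a_2 x_2 + a_3 x_3$, since that would require two independent columns $d, e$ of $T$ with $d\tp (\hess h)\, d = 6 x_2 d_2^2 + 20 x_3^3 d_3^2 = 0$ identically (and likewise for $e$ and the mixed term), forcing $d_2 = d_3 = e_2 = e_3 = 0$. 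So your dichotomy ``not Hesse $\Rightarrow$ span $\ge 2$'' is false, and the theorem as quoted is only correct when ``$h$ satisfies Hesse's theorem'' is read in the affine sense that $\jac h \cdot c \in \C$ for some $c \ne 0$ (equivalently, $h(Tx) \in \C[x_1,x_2] + \C x_3$ for some $T$) --- presumably the precise formulation of \cite[Th.\@ 3.3]{singhess}, which this survey is paraphrasing. Under that reading your Case A closes immediately; under the literal reading it cannot close at all.

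Your main case (span of dimension two) is left as a plan rather than a proof, and one intermediate claim in it is misstated: $\hess \tilde{h} \cdot \tilde{H} = 0$ with $\tilde{H} = (0, b\,g, a\,g)$ says that each partial derivative $\parder{\tilde{h}}{x_i}$ is killed by the derivation $b \parder{}{x_2} + a \parder{}{x_3}$ (after cancelling $g \ne 0$), not that $b \parder{\tilde{h}}{x_2} + a \parder{\tilde{h}}{x_3}$ is a constant. Still, this half of the plan does close with exactly the ingredients you name. Since $\gcd\{a,b\} = 1$ (as furnished by theorem \ref{qtform}), the kernel of that derivation is $\C[x_1,u]$ with $u := a\,x_2 - b\,x_3$, by the same Gauss-lemma argument as in the proof of theorem \ref{qtform}; so write $\parder{\tilde{h}}{x_1} = E(x_1,u)$, $\parder{\tilde{h}}{x_2} = F(x_1,u)$, $\parder{\tilde{h}}{x_3} = G(x_1,u)$. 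The symmetry relation $\parder{}{x_1}\parder{\tilde{h}}{x_2} = \parder{}{x_2}\parder{\tilde{h}}{x_1}$ reads $F_s + F_t \cdot (a' x_2 - b' x_3) = a E_t$, where subscripts denote derivatives in the first and second arguments. In the span-two case $a$ and $b$ are not both constant, and coprimality then gives $a'b - ab' \ne 0$, so $a'x_2 - b'x_3$ is linearly independent of $u$ over $\C(x_1)$; comparing coefficients forces $F_t = 0$, and similarly $G_t = 0$. Hence $\parder{\tilde{h}}{x_2}, \parder{\tilde{h}}{x_3} \in \C[x_1]$ and $\tilde{h} = a_1 + a_2 x_2 + a_3 x_3$ as desired, with no further transformation needed. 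In short: the span-two half of your proposal is sound and completable, but the span-$\le$-one half rests on a false identification of ``degree-one relation'' with ``Hesse relation,'' and that is where the proof (and, literally read, the statement itself) breaks.
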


The results of theorems \ref{hesspos} and \ref{gndim3} for polynomials
in $n$ variables with singular Hessians can be generalized to polynomials with
Hessian rank $n - 1$, as has been done in Theorems 5.3.5 and 5.3.10 in \cite{homokema}.
But the results of theorem \ref{hesspos} and \cite[Th.\@ 3.5]{singhess} admit a similar
genralization. Furthermore, one may replace $\C$ by any field of characteristic zero
in all of the above. These results will appear in a future paper by the author.

In dimension four, only those $h$ for which there exists a nonzero 
$R \in \C[y_1,y_2,\allowbreak y_3,y_4]$ with $R(\grad h) = 0$, such that
$$
c_1 \parder{R}{y_1} + c_2 \parder{R}{y_2} + c_3 \parder{R}{y_3} + c_4 \parder{R}{y_4} = 0
$$
for some nonzero $c \in \C^4$, are known. See \cite[Th.\@ 5.3.3]{homokema} or 
\cite[Th.\@ 3.5]{singhess}. This leads to the following question.

\begin{problem} \label{m}
Assume $h \in \C[x_1,x_2,x_3,x_4]$ such that $\det \hess h = 0$. Does there exist
a nonzero $R \in \C[y_1,y_2,y_3,y_4]$ such that $R(\grad h) = 0$ and
$$
c_1 \parder{R}{y_1} + c_2 \parder{R}{y_2} + c_3 \parder{R}{y_3} + c_4 \parder{R}{y_4} = 0
$$
for some $c \in \C^4$?
\end{problem}

The one who first solves this problem receives a bottle of Joustra Beerenburg (Frisian spirit).
Problem \ref{mm} below generalizes problem \ref{m}. Indeed, suppose that $f$ is a counterexample to
problem \ref{m}. Then it is known that the $R$ in problem \ref{m} cannot be homogeneous, which
is in fact the equivalent of problem \ref{mm} in dimension four instead of five. From that, one 
can deduce that $f + x_5$ is a counterexample to problem \ref{mm}.

\begin{problem} \label{mm}
Assume $h \in \C[x_1,x_2,x_3,x_4,x_5]$ such that $R(\grad h) = 0$ for some
nonzero homogeneous $R \in \C[y_1,y_2,y_3,y_4,y_5]$. Is
$$
c_1 \parder{R}{y_1} + c_2 \parder{R}{y_2} + c_3 \parder{R}{y_3} + c_4 \parder{R}{y_4} + 
c_5 \parder{R}{y_5} = 0
$$
for some $c \in \C^5$?
\end{problem}
 
Quasi-translations in dimension four and homogeneous quasi-translations in 
dimension five are not classified. For quasi-translations $x + H$ 
in dimension five, one can show that $\rk \jac H \le 2$ if $H$ is constructed as
$H = (\grad R)(\grad h)$ for a {\em homogeneous} $h \in \C[x_1,x_2,x_3,x_4,x_5]$ 
and {\em any} $R \in \C[y_1,y_2,y_3,y_4,y_5]$ such that $R(\grad h) = 0$. This
is what Gordan and N{\"o}ther actually did to get theorem \ref{gndim5}, see also
the proof of \cite[Th.\@ 4.1]{hmgqt5dim}.

The quasi-translation $x + \tilde{H}$ in example \ref{exampel} has the 
property that the linear span of the image of $\tilde{H}$ has
dimension $n$. Hence one can ask the following.

\begin{problemm}
Assume $x + H$ is a quasi-translation in dimension $n=4$
such that $H = (\grad R)(\grad h)$ for some $h \in \C[x]$ and
an $R \in \C[y]$ satisfying $R(\grad h) = 0$. Is the dimension of 
the linear span of the image of $H$ less than $n=4$?
\end{problemm}

If you think that you have seen problem \ref{m} before, you are quite
right. Both problems \ref{m} are equivalent.

The quasi-translation of example \ref{exampel} in dimension $n=4$ is not 
homogeneous, but the quasi-translation of example \ref{example} in dimension $n=6$
is indeed homogeneous, and the linear span of the image of $H$ has dimension $n$ as well.
Since the linear span of the image of $H$
has dimension less than $n$ if $n \le 3$ and dimension less than $n-1$
if $H$ is homogeneous and $3 \le n \le 4$, we have the following question.

\begin{problem} \label{hqt5}
Assume $x + H$ is a homogeneous quasi-translation in dimension $n=5$.
Is the dimension of the linear span of the image of $H$ less than $5$?
\end{problem}

Also for this problem, the one who first solves it receives a bottle 
of Joustra Beerenburg (Frisian spirit).

For a counterexample to problem \ref{hqt5}, we would have $\rk \jac H = 3$.
Gordan and N{\"o}ther divide the homogeneous quasi-translations $x + H$
in dimension five with $\rk \jac H = 3$ in two groups, which they indicate
as `Fall a)' and `Fall b)'. They prove that for all `Fall a)' 
quasi-translations $x + H$, the linear span of the image of $H$ is indeed 
less than five. The homogeneization of example \ref{exampel} appears to
be a `Fall b)' quasi-translation. See \cite{hmgqt5dim}, in particular 
section 5 of it, for more information about `Fall b)' quasi-translations 
$x + H$ and problem \ref{hqt5}.

As observed earlier, $\jac H$ is nilpotent if $x + H$ is a quasi-translation,
because of \eqref{qtnilp} in proposition \ref{qtprop}.
Taking about nilpotent matrices, one can wonder how polynomials $h$ with
a nilpotent Hessian look if the dimension or the Jacobian rank is small.

\begin{theorem}
Assume $h \in \C[x]$ is not necessarily homogeneous, such that $\hess h$ is nilpotent. 
Then Hesse's theorem holds in the following cases.
\begin{enumerate}[\upshape (i)]

\item $\rk \hess h \le 2$ or $n \le 4$,

\item $h$ is homogeneous and $n = 5$ (in addition to $\rk \hess h \le 3$, in particular
$n \le 4$, for which $\det \hess h = 0$ is sufficient).

\end{enumerate}
but there is a counterexample in dimension $6$ and a homogeneous counterexample in 
dimension $10$.
\end{theorem}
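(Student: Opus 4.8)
The plan is to pass to the quasi-translation attached to $h$ and feed the small-rank and small-dimension classifications back into Theorems~\ref{regtrans} and~\ref{regtranshmg}. A nilpotent matrix is singular, so $\det\hess h=0$; I fix $R\ne 0$ of minimal degree with $R(\grad h)=0$ and put $H=(\grad_y R)(\grad h)$ as in~\eqref{qthess}. Since $\hess h$ is symmetric, $H$ is a dependence between its columns as well as its rows, so $x+H$ is a quasi-translation with $\hess h\cdot H=0$, and differentiating $H$ gives $\jac H=(\jac_y\grad_y R)|_{y=\grad h}\cdot\hess h$, whence $\rk\jac H\le\rk\hess h$. The key reduction is that the relevant form of Hesse's theorem is equivalent to $\deg R=1$: a linear minimal $R$ reads $\sum_i c_i\,\partial h/\partial x_i=-c_0$, which is~\eqref{linrel} up to the additive constant $c_0$, and $c_0$ vanishes automatically when $h$ is homogeneous of positive degree; in the non-homogeneous case I first delete the linear terms of $h$, which leaves $\hess h$ unchanged and converts the up-to-a-constant statement into an exact one. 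By Theorem~\ref{regtrans} it therefore suffices to prove that the linear span of the image of $H$ has dimension at most one, and when $h$ (hence $R$ and $H$) is homogeneous, Theorem~\ref{regtranshmg} lets me relax this to dimension at most two.

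For part~(i) with $\rk\hess h\le 2$ I have $\rk\jac H\le 2$, and I homogenize $H$ by Proposition~\ref{qthmg} if it is not already homogeneous. If $\rk\jac H\le 1$, Theorem~\ref{qthrk1} forces $H=g\,c$ for a scalar $g$ and a constant $c$, so the image of $H$ spans at most a line and Theorem~\ref{regtrans} finishes the case. The hard sub-case is $\rk\jac H=2$, where the generic bound of Theorem~\ref{qthrk2} only gives span at most $n-2$; here nilpotency must enter. I would use that a rank-two nilpotent symmetric $\hess h$ with $(\hess h)^2=0$ has totally isotropic image, so that $\grad h$, which lies in that image (by Euler's relation $\hess h\cdot x=(d-1)\grad h$), together with the relation $\hess h\cdot H=0$ and the normal form of Theorem~\ref{qtform}, forces the span of the image of $H$ down to a line; the nilpotency-index-three possibility (again rank two) has to be treated separately through the same normal form. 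This isotropy-plus-normal-form collapse is where I expect most of the work to sit.

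For the remaining dimensions I argue from the classifications. For non-homogeneous $h$ with $n\le 4$, after deleting linear terms and homogenizing via Proposition~\ref{qthmg} I land in a homogeneous quasi-translation in dimension at most five and apply Corollary~\ref{qtdimsmall} together with Theorem~\ref{qtform}; the only input beyond Theorem~\ref{hesspos} is that nilpotency excludes the non-Hesse normal form of Theorem~\ref{gndim3}, in which $\det\hess h=0$ but $\hess h$ carries a nonzero eigenvalue. For part~(ii), homogeneous $h$ in dimension $n=5$, I invoke the Gordan–N{\"o}ther classification, Theorem~\ref{gndim5}: were Hesse's theorem to fail, a linear change would give $h=f(x_1,x_2,a_1x_3+a_2x_4+a_3x_5)$ with $a_i\in\C[x_1,x_2]$ and $f\in\C[x_1,x_2,x_3]$, and I would compute $\hess h$ in this normal form and show it cannot be nilpotent unless $h$ already depends on fewer variables. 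Verifying that this Hessian is never nilpotent, e.g.\ that some $\tr\big((\hess h)^k\big)\ne 0$, is the technical heart of~(ii).

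For the negative part I produce explicit $h$ with nilpotent Hessian whose gradient image spans the whole space, so that no nonzero $c$ satisfies~\eqref{linrel} (nor its up-to-a-constant version). The large-span quasi-translations of Examples~\ref{exampel} and~\ref{example}, whose images span $\C^4$ and $\C^6$ respectively, indicate the required shape: I would build $h$ so that $\grad h$ realizes such a spanning map while $\hess h$ inherits nilpotency from~\eqref{qtnilp}, obtaining the counterexample in dimension six and, by a homogeneous variant of the construction, the one in dimension ten. The principal obstacles are the $\rk\jac H=2$ collapse in~(i) and the dimension-five computation in~(ii), where the generic quasi-translation bounds are too weak and nilpotency must be exploited in an essential way, together with the explicit assembly of the two counterexamples.
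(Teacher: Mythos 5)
Your proposal does not close the argument; it is an outline whose load-bearing steps are either missing or incorrect. It should also be said up front that the paper does not prove this theorem internally either: its entire proof consists of citing \cite[Th.~5.7.1]{homokema} for (i) and (ii) and \cite[\S 5.7]{homokema} for the two counterexamples, so a complete self-contained argument would necessarily go beyond the paper's own machinery --- which is exactly where your plan breaks down. Three of your bridging steps fail concretely. First, part (i) allows non-homogeneous $h$, but your treatment of the rank-two case rests on Euler's relation $\hess h \cdot x = (d-1)\grad h$, which requires homogeneity, and your rank-one case invokes theorem \ref{qthrk1}, which is a statement about \emph{homogeneous} quasi-translations; for non-homogeneous ones the relevant result is theorem \ref{qtrk1}, which only bounds the span of the image by $n-1$, far too weak to feed into theorem \ref{regtrans}. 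Second, homogenizing via proposition \ref{qthmg} does not repair this: the homogenization $\tilde H$ is not itself of the form $(\grad_y R)(\grad h)$ for any polynomial with nilpotent Hessian, so the nilpotency hypothesis gives no leverage on $\tilde H$, and the bound available for $\tilde H$ when $\rk \jac \tilde H = 2$ is again span at most $n-1$, not the span at most one that theorem \ref{regtrans} needs. Third, for non-homogeneous $h$ with $n \le 4$ you homogenize into dimension five and then cite corollary \ref{qtdimsmall}; but that corollary assumes $n \le 3$ in general or $n \le 4$ with homogeneity, and the span of the image of a homogeneous quasi-translation in dimension five is precisely the open problem \ref{hqt5}, so this step cannot be carried out with the paper's results.

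Beyond these errors, everything that makes the theorem true is announced rather than proved: the ``isotropy-plus-normal-form collapse'' for $\rk \jac H = 2$ in (i), the verification in (ii) that a non-Hesse homogeneous $h$ in the Gordan--N{\"o}ther normal form of theorem \ref{gndim5} cannot have a nilpotent Hessian, and the counterexamples in dimensions $6$ and $10$ are all explicitly deferred by you (``where I expect most of the work to sit'', ``the technical heart'', ``I would build $h$ so that\ldots''). Note also that nilpotency does not by itself exclude normal forms of this kind: $h = x_1x_2 + \sqrt{-1}\,x_1x_3$ is of the shape appearing in theorem \ref{gndim3} and has a nilpotent Hessian of index three, so any argument along your lines must genuinely use the failure of Hesse's theorem, not merely the normal form. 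What is sound in your plan --- the reduction of Hesse's theorem to $\deg R = 1$ via theorems \ref{regtrans} and \ref{regtranshmg}, and the inequality $\rk \jac H \le \rk \hess h$ --- is the easy part; the hard part, how nilpotency improves on the generic quasi-translation bounds, is exactly what the paper outsources to \cite[Th.~5.7.1 and \S 5.7]{homokema} and what your proposal leaves open.
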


\begin{proof}
(i) and (ii) follow from \cite[Th.\@ 5.7.1]{homokema}. See the beginning 
of \cite[\S 5.7]{homokema} for how the counterexamples can be obtained.
\end{proof}

\end{document}